\renewcommand\MR[1]{\relax} 
\newtheorem{thm}{Theorem}[section]
\numberwithin{equation}{section}
\newtheorem{cor}[thm]{Corollary}
\newtheorem{lemma}[thm]{Lemma}
\newtheorem{prop}[thm]{Proposition}
\theoremstyle{definition}
\newtheorem{definition}[thm]{Definition}
\theoremstyle{remark}
\newtheorem{remark}[thm]{Remark}
\newtheorem{example}[thm]{Example}
\newtheorem{mycomment}[thm]{Comment}
{\end{mycomment}\endgroup}
\def\mathcs{C^{*}}
\newcommand{\cs}{\ensuremath{\mathcs}}
\DeclareMathSymbol{\rtimes}{\mathbin}{AMSb}{"6F}
\def\R{\mathbf{R}}
\def\T{\mathbf{T}}
\def\Z{\mathbf{Z}}
\def\H{\mathcal{H}}
\def\L{\mathcal{L}}
\def\Q{\mathbf{Q}}
\DeclareMathOperator{\id}{id}
\DeclareMathOperator{\ext}{ext}
\def\set#1{\{\,#1\,\}}
\newcommand\sset[1]{\{#1\}}
\def\restr#1{|_{{#1}}}
\def\labelenumi{\textnormal{(\@alph\c@enumi)}}
\def\theenumi{\@alph \c@enumi}
\def\labelenumii{\textnormal{(\@roman\c@enumii)}}
\def\theenumii{\@roman \c@enumii}
\def\alphapart#1{\charno=96
\advance\charno by#1\char\charno}
\def\<{\langle}
\def\>{\rangle}
\let\ipscriptstyle=\scriptscriptstyle
\def\lipsqueeze{{\mskip -3.0mu}}
\def\ripsqueeze{{\mskip -3.0mu}}
\def\ipcomma{\nobreak\mathrel{,}\nobreak}
\newbox\ipstrutbox
\def\ipstrut{\copy\ipstrutbox}
\def\lip#1<#2,#3>{\mathopen{\relax_{\ipstrut\ipscriptstyle{
#1}}\lipsqueeze
\langle} #2\ipcomma #3 \rangle}
\def\blip#1<#2,#3>{\mathopen{\relax_{\ipstrut
\ipscriptstyle{ #1}}\lipsqueeze\bigl\langle} #2\ipcomma #3 \bigr\rangle}
\def\rip#1<#2,#3>{\langle #2\ipcomma #3
\rangle_{\ripsqueeze\ipstrut\ipscriptstyle{#1}}}
\def\brip#1<#2,#3>{\bigl\langle #2\ipcomma #3
\bigr\rangle_{\ripsqueeze\ipstrut\ipscriptstyle{#1}}}
\def\angsqueeze{\mskip -6mu}
\def\smangsqueeze{\mskip -3.7mu}
\def\trip#1<#2,#3>{\langle\smangsqueeze\langle #2\ipcomma #3
\rangle\smangsqueeze\rangle_{\ripsqueeze\ipstrut\ipscriptstyle{#1}}}
\def\btrip#1<#2,#3>{\bigl\langle\angsqueeze\bigl\langle #2\ipcomma
#3
\bigr\rangle
\angsqueeze\bigr\rangle_{\ripsqueeze\ipstrut\ipscriptstyle{#1}}}
\def\tlip#1<#2,#3>{\mathopen{\relax_{\ipstrut\ipscriptstyle{
#1}}\lipsqueeze \langle\smangsqueeze\langle} #2\ipcomma #3
\rangle\smangsqueeze\rangle}
\def\btlip#1<#2,#3>{\mathopen{\relax_{\ipstrut\ipscriptstyle{
#1}}\lipsqueeze
\bigl\langle\angsqueeze\bigl\langle} #2\ipcomma #3
\bigr\rangle\angsqueeze\bigr\rangle}
\def\ip(#1|#2){(#1\mid #2)}
\def\bip(#1|#2){\bigl(#1 \mid #2\bigr)}
\def\Bip(#1|#2){\Bigl( #1 \bigm| #2 \Bigr)}
\newcommand\U{\mathscr{U}}
\newcommand\V{\mathscr{V}}
\newcommand\sheaffont{\mathscr}
\newcommand\uG{\sheaffont{G}}
\newcommand\uT{\sheaffont{S}}
\newcommand\A{\mathcal A}
\newcommand\hA{\hat \A}
\newcommand{\B}{\mathcal B}
\newcommand{\D}{\mathcal D}
\newcommand{\G}{\mathcal G}
\newcommand\Sigmaw{\widetilde\Sigma}
\newcommand\go{\Sigma^{(0)}}
\newcommand\goo{\G^{(0)}}
\newcommand\hp{\hat p}
\newcommand\cc{C_{c}}
\let\phi\varphi
\newcommand\phiw{\hat\phi} 
\def\SD(#1,#2){{#1}\triangleleft{#2}}
\newcommand\Tg{T_{\G}}
\newcommand\Tga{\Tg(\A)}
\newcommand{\dotk}{\dot{k}}
\newcommand\Twist{\Lambda}
\newcommand\fchi{f^{\chi}} 
\newcommand\fchiss{\fchi_{*}(\Sigma)}
\newcommand\Sigmah{\Sigmaw}
\newcommand{\hTwistOmega}{\widehat{\Twist}_\Omega}
\begin{document}
\begin{abstract}
  We analyse extensions $\Sigma$ of groupoids $\G$ by bundles $\A$ of
  abelian groups. We describe a pushout construction for such
  extensions, and use it to describe the extension group of a given
  groupoid $\G$ by a given bundle $\A$. There is a natural action
  of $\Sigma$ on the dual of $\A$, yielding a corresponding
  transformation groupoid. The pushout of this transformation groupoid
  by the natural map from the fibre product of $\A$ with its dual to
  the Cartesian product of the dual with the circle is a twist over the
 transformation groupoid resulting from the action of $\G$ on the dual
 of $\A$. We prove that the full 
  $C^*$-algebra of this twist is isomorphic to the full $C^*$-algebra
  of $\Sigma$, and that this isomorphism descends to an isomorphism of
  reduced algebras. We give a number of examples and applications.
\end{abstract}

\title{Pushouts of extensions of groupoids by bundles of abelian
  groups}

\date{10 July 2021}

\author[M. Ionescu]{Marius Ionescu}
\address{Department of Mathematics\\  United States Naval Academy\\
  Annapolis, MD 21402 USA}
\email{ionescu@usna.edu}

\author[A. Kumjian]{Alex Kumjian}
\address{Department of Mathematics \\ University of Nevada\\ Reno NV
  89557 USA}
\email{alex@unr.edu}

\author{Jean N. Renault}
\address{Institut Denis Poisson (UMR 7013)\\
  Universit\'e d'Orl\'eans et CNRS \\ 45067
  Orl\'eans Cedex 2, France}
\email{jean.renault@univ-orleans.fr}

\author[A. Sims]{Aidan Sims}
\address{School of Mathematics and Applied Statistics\\ University of
  Wollongong\\ NSW 2522, Australia}
\email{asims@uow.edu.au}

\author[D. P. Williams]{Dana P. Williams}
\address{Department of Mathematics\\ Dartmouth College \\ Hanover, NH
  03755-3551 USA}
\email{dana.williams@Dartmouth.edu}

\dedicatory{We respectfully dedicate this paper to the memory of
  Vaughan Jones: Extraordinary mathematician, proud New Zealander, and
  gracious colleague.}

\thanks{This research was supported by Australian Research Council
  grant DP180100595. This work was also partially supported by Simons
  Foundation Collaboration grants \#209277 (MI), \#353626 (AK) and
  \#507798 (DPW), and by Grant N0017318WR00251 from the Office of
  Naval Research and by the Naval Research Laboratory. This work was
  also facilitated by visits of the 
  second and fifth author to the University of Wollongong as well as a
visit by the second author to the Universit\'e d'Orl\'eans.  We thank
our respective hosts and their institutions for their hospitality and
support.}

\keywords{groupoid; $C^*$-algebra; pushout}

\subjclass[2020]{46L05; 20L05}

\maketitle

\section*{Introduction}

There is a significant body of literature regarding the \cs-algebras
of extensions of groupoids by group bundles. The main goal of this
paper is to introduce a pushout construction for extensions of
groupoids by abelian group bundles and explore its applications.

Specifically, we consider a locally compact Hausdorff groupoid $\G$
together with an abelian group bundle $p_{\A}:\A\to\goo$ where
$p_{\A}$ a continuous, open map.  Then we consider unit space fixing
extensions
\begin{equation}
  \label{eq:formal-ext-intro}\tag{\dag}
  \begin{tikzcd}[column sep=3cm]
    \A \arrow[r,"\iota"] \arrow[dr,shift left, bend right = 15]
    \arrow[dr,shift right, bend right = 15]&\Sigma \arrow[r,"p", two
    heads] \arrow[d,shift left] \arrow[d,shift right]&\G
    \arrow[dl,shift left, bend left = 15] \arrow[dl,shift right, bend
    left = 15]
    \\
    &\goo&
  \end{tikzcd}
\end{equation}
where $\Sigma$ is a locally compact Hausdorff groupoid, both $\iota$
and $p$ are groupoid homomorphisms, $p$ is a continuous, open
surjection inducing a homeomorphism of $\go$ and $\goo$, $\iota$ is a
homeomorphism of $\A$ onto $\ker p$.

A fundamental class of such examples are $\T$-groupoids (also called
twists) introduced by the second author in \cite{kum:lnim83}.  Then
$\A$ is the trivial bundle $\goo\times\T$ such that
$\iota(r(\sigma),z)\sigma=\sigma\iota(s(\sigma),z)$ for all
$\sigma\in\Sigma$ and $z\in\T$.  These groupoids and their restricted
groupoid \cs-algebras, $\cs(\G;\Sigma)$, have enjoyed considerable
scrutiny \citelist{\cite{muhwil:ms92}, \cite{muhwil:plms395},
  \cite{kum:lnim83}, \cite{kum:cjm86}}. As usual, in this context we
often write $\dot\sigma$ in place of $p(\sigma)$.

More recently, we considered more general extensions in
\cite{iksw:jot19} and \cite{ikrsw:jfa21} as in
\eqref{eq:formal-ext-intro} where it is assumed that $\A$ is endowed
with an action of $\G$ and that the extension is compatible in the
sense
that
$\sigma\iota(a)\sigma^{-1}=\iota(\dot\sigma\cdot a)$ for all
$a \in \A$ and $\sigma \in \Sigma$ such that $p_{\A}(a) = s(\sigma)$.

As a consequence of the main result in \cite{ikrsw:jfa21}, we showed
that if $\Sigma$ has a Haar system, then $C^*(\Sigma)$ can be realized
as the $C^*$-algebra of a twist. Specifically, the action of $\G$ on
$\A$ induces a natural action of $\G$ on $\hA$ (regarded as a
space). We constructed a $\T$-groupoid $\Sigmaw$ of the form
\begin{equation}
  \label{eq:13a}\tag{\ddag}
  \begin{tikzcd}[column sep=3cm]
    \hA\times\T \arrow[r,"i"] \arrow[dr,shift left, bend right = 15]
    \arrow[dr,shift right, bend right = 15]&\Sigmaw \arrow[r,"j", two
    heads] \arrow[d,shift left] \arrow[d,shift
    right]&\hA\rtimes\G. \arrow[dl,shift left, bend left = 15]
    \arrow[dl,shift right, bend left = 15]
    \\
    &\hA&
  \end{tikzcd}
\end{equation}
We proved (\cite{ikrsw:jfa21}*{Theorem~3.4}) that $C^*(\Sigma)$ is
isomorphic to the restricted $C^*$-algebra
$C^*(\hA \rtimes \G; \Sigmaw)$ of this $\T$-groupoid. (In
\cite{ikrsw:jfa21} the $\T$-groupoid is denoted $\widehat{\Sigma}$,
but here we use $\Sigmaw$ to avoid possible confusion in our
examples.) The $\T$-groupoid $\Sigmaw$ is at the heart of the Mackey
obstruction which appears in the classical ``Mackey machine'' of
\cite{mac:am58}.
  
The chief motivation for this article is the observation that the
$\T$-groupoid $\Sigmaw$ above---which was based on the construction of
\cite{mrw:tams96}*{Proposition~4.3}---is derived from a natural and
functorial ``pushout'' construction based on the second author's work
in \cite{kum:jot88} for \'etale groupoids (there called ``sheaf
groupoids'').  Specifically, suppose we are given and extension as in
\eqref{eq:formal-ext-intro}, and abelian group bundle $\B$ admitting a
$\G$-action, and a equivariant groupoid homomorphism $f:\A\to \B$.
Then there is a similar sort of extension
\begin{equation}
  \label{eq:-ext-fstar}
  \begin{tikzcd}[column sep=3cm]
    \B \arrow[r,"\iota"] \arrow[dr,shift left, bend right = 15]
    \arrow[dr,shift right, bend right = 15]&f_{*}\Sigma \arrow[r,"p",
    two heads] \arrow[d,shift left] \arrow[d,shift right]&\G
    \arrow[dl,shift left, bend left = 15] \arrow[dl,shift right, bend
    left = 15]
    \\
    &\goo&
  \end{tikzcd}
\end{equation}
inducing the given $\G$-action on $\B$.  In Theorem~\ref{thm:functor},
we show that the construction producing $f_{*}\Sigma$ has good
functorial properties that characterize the extension up to a suitable
notion of isomorphism.  Using these properties, we show in
Theorem~\ref{thm:functoriali} that the collection $\Tga$ of
isomorphism classes of extensions of $\A$ by $\G$ forms an abelian
group (see also \cite{tu:tams06}*{\S{5.3}}).

We close by illustrating how the pushout construction clarifies and
interacts with our work in \cite{iksw:jot19} and \cite{ikrsw:jfa21}.
In Theorem~\ref{thm:pushout} we prove that the
extension~\eqref{eq:13a} employed in \cite{ikrsw:jfa21} arises from
our pushout construction. Specifically, the natural pairing
$(\chi, a) \mapsto \chi(a)$ from $\hA * \A$ to $\T$ yields a groupoid homomorphism
$f: \hA * \A \to \hA\times\T$ given by $f(\chi, a) = (\chi, \chi(a))$ 
(see Section~\ref{sec:t-groupoid}).
There is a natural action of $\Sigma$ on $\hA$ (compatible with that
of $\G$ as above) and we prove that
$\Sigmaw \cong f_*(\hA \rtimes \Sigma)$.  This allows us to realise
the $C^*$-algebra of an extension of a groupoid $\G$ by an abelian
group bundle $\A$ as the $C^*$-algebra of a $\T$-groupoid over the
resulting transformation groupoid $\hA \rtimes \G$.

Several consequences flow from this observation.  
First suppose that A is an abelian group and that
$\A = \goo \times A$, carrying the action of $\G$ that is trivial
in the second coordinate, so that $\Sigma$ is a generalised twist. Each
$\chi \in \hat{A}$ defines a homomorphism
$\fchi: \A \to \T \times \G^{(0)}$, so we can form the resulting
pushout $\fchiss$. We prove in Proposition~\ref{prop:bundle} that
$C^*(\Sigma)$ is the section algebra of an upper-semicontinuous
$C^*$-bundle over $\hat{A}$ with fibres $C^*(\G, \fchiss)$.
When $A$ is compact, this yields a direct sum decomposition which
remains valid for the corresponding reduced $\cs$-algebras (see
Proposition~\ref{prop:compact-case}).  In
Corollary~\ref{cor:twisted-extensions} we extend
\cite{ikrsw:jfa21}*{Theorem~3.4} to the case that $\Omega$ is a
$\T$-groupoid extension of $\Sigma$ such that its restriction to
$\iota(\A)$ is abelian.  When $\G$ is {\'e}tale, this enables us
to generalize \cite[Theorem 4.6]{ikrsw:jfa21} to this case (see
Corollary~\ref{cor:twisted-cartan}) thereby providing criteria that
guarantee that the natural abelian subalgebra of
$\cs_r(\Sigma; \Omega)$ is Cartan (see also \cite[Theorem
5.8]{dgnrw:jfa20} and \cite[Theorem 4.6]{dgn:xx20}).

In Subsection~\ref{sec:twists-cocycles}, we consider the case where
the extension $\Sigma$ is determined by an $\A$-valued 2-cocycle
defined on $\G$ and show that the pushout construction is compatible
with the natural change of coefficients map on
cocycles.  
We describe the explicit construction of $\Sigmaw$ in
terms of $2$-cocycles at the beginning of
Subsection~\ref{sec:t-groupoid-2}, and then 
consider various examples of this construction. 
 
\section{Pushouts of groupoid extensions}
\label{sec:push}

We fix a locally compact Hausdorff groupoid $\G$. In our applications,
$\G$ will have a Haar system, but this is not required for the pushout
construction itself.  However, we do assume that $\G$ has open range
and source maps.  We call a locally compact abelian group bundle
$p_{\A}:\A\to\goo$ a \emph{$\G$-bundle} if $p_{\A}$ is open and $\G$
acts on the left of $\A$ by automorphisms.  For compatibility with
\cite{iksw:jot19}---and other examples we have in mind---we will write
the group operations in the fibres of such $\A$ additively.  An
extension $\Sigma$ of $\A$ by $\G$ is determined by a diagram
\eqref{eq:formal-ext-intro} as in the introduction.
Recall that $\Sigma$ is a locally compact Hausdorff groupoid, $p$ is
continuous and open surjection inducing a homeomorphism from $\go$ onto
$\goo$, and $\iota$ is a continuous open injective homomorphism onto
$\ker p=\set{\sigma\in\Sigma:p(\sigma)\in\goo}$.  We call such an
extension \emph{compatible} if the action of $\G$ on $\A$ induced by
conjugation is the given $\G$-action on $\A$; that is,
$\sigma\iota(a)\sigma^{-1} =\iota(\dot\sigma\cdot a)$.

\begin{definition}
  \label{def-prop-iso} If $\Sigma_{1}$ and $\Sigma_{2}$ are compatible
  extensions by a locally compact abelian group $\G$-bundle $\A$, then
  we say that they are \emph{properly isomorphic} if there is a
  groupoid isomorphism $f:\Sigma_{1}\to\Sigma_{2}$ such that the
  diagram
  \begin{equation}
    \label{eq:73}
    \begin{tikzcd}[row sep = 1ex, column sep = 2cm]
      &\Sigma_{1} \arrow[dd,"f"] \arrow[rd,"p_{1}"] \\
      \A \arrow[ru,"\iota_{1}"] \arrow[rd,"\iota_{2}",swap] && \G \\
      &\Sigma_{2}\arrow[ru,"p_{2}",swap] \\
    \end{tikzcd}
  \end{equation}
  commutes.  We let $\Tga$ be the collection of proper isomorphism
  classes of compatible extensions. We denote the equivalence class of
  a compatible extension $\Sigma$ by $[\Sigma]$.
\end{definition}

\begin{remark}\label{rmk:Kum-T-extensions}
  The second author considered extensions of this sort for \'etale
  groupoids in \cite{kum:jot88}*{\S{2}}. In
  \cite{tu:tams06}*{\S{5.3}}, Tu denotes this set by $\ext(\G, \A)$
  and states that it forms an abelian group (see Theorem
  \ref{thm:functoriali} below).  Since the openness of
  $p_{\A}:\A\to\goo$ implies that $\A$ has a Haar system (see
  \cite{ikrsw:jfa21}*{Lemma~2.1}), it follows that if $\G$ has a Haar
  system, then we can then equip $\Sigma$ with a Haar system whenever
  $[\Sigma]\in\Tga$ (see \cite{ikrsw:jfa21}*{Lemma~2.6}).
\end{remark}

Of course, given $\G$ and a $\G$-bundle $\A$, we would like to know
that $\Tga$ is not empty.  To provide a basic example, we follow
\cite{kum:jot88}*{Definition~2.1}.

\begin{example}
  [The Semidirect Product] \label{ex-semi-direct} We can build a
  fundamental compatible extension $\SD(\A,\G)$ from the fibred
  product $\set{(a,\gamma)\in\A\times \G:p_{\A}(a)=r(\gamma)}$.  We
  let
  $(\SD(\A,\G))^{(2)}=
  \bigl\{\,\bigl((a_{1},\gamma_{1}),(a_{2},\gamma_{2})\bigr):s(\gamma_{1})
  =r(\gamma_{2})\,\bigr\}$, and then define
  \begin{align}
    \label{eq:60}
    (a_{1},\gamma_{1})(a_{2},\gamma_{2})= (a_{1}+\gamma_{1}\cdot
    a_{2},\gamma_{1}\gamma_{2})\quad\text{and}\quad (a,\gamma)^{-1}=
    (-(\gamma^{-1}\cdot a),\gamma^{-1}).
  \end{align}
  Then we can identify the unit space of $\SD(\A,\G)$ with $\goo$ so
  that $r(a,\gamma)=r(\gamma)$ and $s(a,\gamma)=s(\gamma)$.  We can
  then exhibit $\SD(\A,\G)$ as an extension by letting
  $\iota(a)=(a,p_{\A}(a))$, and letting $p(a,\gamma)=\gamma$. Since
  \begin{equation}
    \label{eq:72}
    (a',\gamma)(a,p_{\A}(a))(-\gamma^{-1}\cdot a',\gamma^{-1}) =
    (\gamma\cdot a,p_{\A}(\gamma\cdot a)),
  \end{equation}
  $\SD(\A,\G)$ is a compatible extension as required.
\end{example}

\begin{example}\label{ex-fibred}
  For $i = 1, 2$ let $\A_i$ be a locally compact abelian group
  $\G$-bundle.  Note that
  $\A_{1}*\A_{2}=\set{(a,a'):p_{\A_{1}}(a)=p_{\A_{2}}(a')}$ is also a
  locally compact abelian group $\G$-bundle.  Let $\Sigma_i$ be a
  compatible groupoid extension of $\G$ by $\A_i$.  Then as in
  \cite{kum:jot88}*{\S{2}}, we may form the fibered product
  \[
    \Sigma_1 *_\G \Sigma_2 := \{ (\sigma_1, \sigma_2) \in \Sigma_1
    \times \Sigma_2 \mid p_1(\sigma_1) = p_2(\sigma_2) \}.
  \]
  It is straightforward to check that $\Sigma_1 *_\G \Sigma_2$ is a
  compatible groupoid extension of $\G$ by $\A_1 * \A_2$.
\end{example}

Assume now that $\B$ is another abelian group $\G$-bundle, and that
$f:\A\to\B$ is a 
$\G$-equivariant map. Following \cite{kum:jot88}*{Proposition~2.6}, we
prove that we can ``pushout'' $\Sigma$ in a unique way to an extension
of $\G$ by $\B$.

\begin{thm}[Pushout Construction]\label{thm:functor}
  Let $\A$ and $\B$ be locally compact abelian group $\G$-bundles. Let
  $f:\A\to \B$ be a continuous 
  $\G$-equivariant map. Assume that $\Sigma$ is a compatible extension
  of $\G$ by $\A$. Then there is a compatible extension $f_*\Sigma$ of
  $\G$ by $\B$ and a homomorphism $f_*:\Sigma\to f_*\Sigma$ such that
  the following diagram commutes
  \begin{equation}
    \label{eq:74}
    \begin{tikzcd}[row sep = 1ex, column sep = 2cm]
      \A\arrow[r,"\iota"] \arrow[dd,"f"] &\Sigma \arrow[dd,"f_{*}"]
      \arrow[rd,"p"] \\
      && \G .\\
      \B \arrow[r,"\iota_{*}",swap]
      &f_{*}\Sigma\arrow[ru,"p_{*}",swap] \\
    \end{tikzcd}
  \end{equation}
  Moreover, $f_{*}$ and $f_{*}\Sigma$ are unique up to proper
  isomorphism in the sense that if $\Sigma'$ is a compatible extension
  such that the diagram
  \begin{equation}
    \label{eq:84}
    \begin{tikzcd}[row sep = 1ex, column sep = 2cm]
      \A\arrow[r,"\iota"] \arrow[dd,"f"]& \Sigma \arrow[dd,"f'"]
      \arrow[rd,"p"] \\
      && \G \\
      \B\arrow[r,"\iota'"] & \Sigma' \arrow[ru,"p'"]
    \end{tikzcd}
  \end{equation}
  commutes, then there is a proper isomorphism
  $g:f_{*}\Sigma\to \Sigma'$ such that $g\circ f_{*}=f'$.
\end{thm}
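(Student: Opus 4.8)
The plan is to realise $f_{*}\Sigma$ concretely as a quotient of a semidirect-product groupoid. First I would pull the $\G$-action on $\B$ back along $p\colon\Sigma\to\G$ to an action of $\Sigma$ on $\B$ and form the semidirect product $\SD(\B,\Sigma)$, defined exactly as the groupoid $\SD(\A,\G)$ of Example~\ref{ex-semi-direct} but with $\Sigma$ in place of $\G$; thus $\SD(\B,\Sigma)=\{(b,\sigma):p_{\B}(b)=r(\sigma)\}$, with $r(b,\sigma)=r(\sigma)$, $s(b,\sigma)=s(\sigma)$, product $(b_{1},\sigma_{1})(b_{2},\sigma_{2})=(b_{1}+\dot\sigma_{1}\cdot b_{2},\sigma_{1}\sigma_{2})$ and inverse $(b,\sigma)^{-1}=(-\dot\sigma^{-1}\cdot b,\sigma^{-1})$. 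Since $p_{\B}$, $r$ and $s$ are open, $\SD(\B,\Sigma)$ is a locally compact Hausdorff groupoid with open range and source maps. Inside it I single out $N:=\{(f(a),\iota(a)^{-1}):a\in\A\}$, a closed subgroupoid lying in the isotropy bundle over $\goo$ and homeomorphic to $\A$ via $a\mapsto(f(a),\iota(a)^{-1})$. Using $\G$-equivariance of $f$ and compatibility of $\Sigma$, a short computation gives $(b,\sigma)(f(a),\iota(a)^{-1})(b,\sigma)^{-1}=(f(\dot\sigma\cdot a),\iota(\dot\sigma\cdot a)^{-1})$ whenever $s(\sigma)=p_{\A}(a)$, so $N$ is normal. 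I then set $f_{*}\Sigma:=\SD(\B,\Sigma)/N$, write $[b,\sigma]$ for classes, and define $p_{*}[b,\sigma]=p(\sigma)$, $\iota_{*}(b)=[b,p_{\B}(b)]$ (with $p_{\B}(b)$ regarded as a unit) and $f_{*}(\sigma)=[0,\sigma]$.

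Next I would verify the algebra: one identifies the classes explicitly ($[b_{1},\sigma_{1}]=[b_{2},\sigma_{2}]$ iff $p(\sigma_{1})=p(\sigma_{2})$ and $b_{1}-b_{2}$ is the appropriate $\dot\sigma_{1}$-translate of $f(\iota^{-1}(\sigma_{1}^{-1}\sigma_{2}))$) and checks that $p_{*}$, $\iota_{*}$, $f_{*}$ are well-defined groupoid homomorphisms making~\eqref{eq:74} commute, that $\iota_{*}$ is injective with image $\ker p_{*}$, that $p_{*}$ is a surjection restricting to a bijection of unit spaces, and that conjugation by $[b,\sigma]$ sends $\iota_{*}(b')$ to $\iota_{*}(\dot\sigma\cdot b')$ — so $f_{*}\Sigma$ is a compatible extension of $\G$ by $\B$. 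For the topology, the crucial point is that the quotient map $q\colon\SD(\B,\Sigma)\to f_{*}\Sigma$ is open: one writes the saturation of an open set $U$ as $\pi_{1}(T(\widetilde U))$, where $\widetilde U$ is the preimage of $U$ in the fibred product $\SD(\B,\Sigma)*_{s}\A$, $T$ is the translation self-homeomorphism $((b,\sigma),a)\mapsto((b,\sigma)(f(a),\iota(a)^{-1}),a)$ of that fibred product, and $\pi_{1}$ is the first-coordinate projection, which is open because $p_{\A}$ is open. Since $N$ is closed, the equivalence relation it induces is closed in $\SD(\B,\Sigma)\times\SD(\B,\Sigma)$, and together with openness of $q$ this makes $f_{*}\Sigma$ locally compact Hausdorff; once $q$ is open, the net characterisation of open maps transfers continuity and openness of the structure maps of $\SD(\B,\Sigma)$ to $f_{*}\Sigma$, making $p_{*}$ a continuous open surjection that restricts to a homeomorphism of unit spaces and $\iota_{*}$ a homeomorphism onto $\ker p_{*}$. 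This settles the first half of the theorem.

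For the uniqueness statement, given $\Sigma'$ and $f'$ as in~\eqref{eq:84}, I would define $g\colon f_{*}\Sigma\to\Sigma'$ by $g[b,\sigma]=\iota'(b)f'(\sigma)$. The two identities $f'\circ\iota=\iota'\circ f$ and $\tau'\iota'(\beta)\tau'^{-1}=\iota'(p'(\tau')\cdot\beta)$ (the latter being compatibility of $\Sigma'$) show simultaneously that $g$ is well defined on $N$-classes and that it is a groupoid homomorphism; it is then immediate that $g\circ f_{*}=f'$, $g\circ\iota_{*}=\iota'$ and $p'\circ g=p_{*}$. Injectivity follows by applying $p'$ to reduce to the case $p(\sigma_{1})=p(\sigma_{2})$ and then invoking the explicit description of $N$-classes; surjectivity follows since for $\tau'\in\Sigma'$ one may choose $\sigma\in\Sigma$ with $p(\sigma)=p'(\tau')$, observe $f'(\sigma)^{-1}\tau'\in\ker p'=\iota'(\B)$, say it equals $\iota'(b)$, and rewrite $\tau'=f'(\sigma)\iota'(b)=\iota'(p'(\tau')\cdot b)f'(\sigma)=g[p'(\tau')\cdot b,\sigma]$.

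The main obstacle is the final step: showing that the bijective homomorphism $g$ is a homeomorphism. This is delicate precisely because neither $f$ nor $f'$ is assumed open, so $g\circ q=\Phi\colon(b,\sigma)\mapsto\iota'(b)f'(\sigma)$ is not obviously an open map. Since $q$ is an open surjection, $g$ is open if and only if $\Phi$ is, so it suffices to prove $\Phi$ open, and I plan to do this with the net criterion: if $\tau'_{\lambda}\to\Phi(b_{0},\sigma_{0})$ in $\Sigma'$, then $p'(\tau'_{\lambda})\to p(\sigma_{0})$, and openness of $p\colon\Sigma\to\G$ yields a subnet and elements $\sigma_{\mu}\to\sigma_{0}$ with $p(\sigma_{\mu})=p'(\tau'_{\mu})$; continuity of $f'$ then gives $\tau'_{\mu}f'(\sigma_{\mu})^{-1}\to\iota'(b_{0})$ inside the closed subgroupoid $\ker p'$, and continuity of $\iota'^{-1}$ there yields $b_{\mu}:=\iota'^{-1}(\tau'_{\mu}f'(\sigma_{\mu})^{-1})\to b_{0}$ with $\tau'_{\mu}=\Phi(b_{\mu},\sigma_{\mu})$ and $(b_{\mu},\sigma_{\mu})\to(b_{0},\sigma_{0})$. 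Hence $\Phi$, and therefore $g$, is open, so $g$ is a proper isomorphism with $g\circ f_{*}=f'$. Apart from this openness argument, and the verification that $q$ is open, every step is a routine diagram chase using the equivariance and compatibility identities.
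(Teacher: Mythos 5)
Your proposal is correct and follows essentially the same route as the paper: your $\SD(\B,\Sigma)/N$ is canonically the paper's $\D/\theta(\A)$ (your $N$ consists exactly of the inverses of the elements $\theta(a)=((-f(a),p_{\A}(a)),\iota(a))$), and your uniqueness map $g[b,\sigma]=\iota'(b)f'(\sigma)$ together with the net-criterion openness arguments matches the paper's. The only difference is cosmetic: where the paper invokes a lemma from an earlier work to see that the quotient is a locally compact Hausdorff groupoid, you supply a direct translation argument for the openness of the quotient map.
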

\begin{proof}
  Consider the fibred-product groupoid
  \[
    \D:= (\SD(\B, \G))*_\G \Sigma=\{\,(( b,\gamma),\sigma )\in
    (\SD(\B,\G))\times \Sigma\, : \text{$\dot\sigma=\gamma$}\,\}
  \]
  of Example~\ref{ex-fibred}.  Define $\theta:\A\to \D$ via
  $\theta(a)=\bigl( (-f(a),p_{\A}(a)),\iota(a)\bigr)$. Since $\iota$
  is a homeomorphism onto its closed range, $\theta(\A)$ is a closed
  wide subgroupoid of $\D$.

  Let $d=((b,\gamma),\sigma)\in \D$.  We claim that
  $d\theta(\A)=\theta(\A)d$.  To see this, note that
  \begin{align}
    \label{eq:3d}
    d\theta(a)
    &=
      \bigl((b,\gamma),\sigma)((-f(a),p_{\A}(a)),\iota(a)\bigr)
    \\
    &= \bigl((b-\gamma\cdot f(a),\gamma),\sigma\iota(a)
      \bigr)  \\
    &= \bigl((-f(\gamma\cdot a)+p_{\A}(\gamma\cdot a)\cdot
      b,\gamma),\iota(\dot\sigma\cdot
      a)\sigma\bigr). \\
    \intertext{Since $\dot\sigma=\gamma$, we deduce that}
    d\theta(a)
    &= \bigl((-f(\gamma\cdot a),p_{\A}(\gamma\cdot
      a)),\iota(\gamma\cdot a)\bigr)
      (b,\gamma,\sigma) \\
    &= \theta(\gamma\cdot a) d.
  \end{align}
  Let $f_*\Sigma:=\D/\theta(\A)$.  As usual, we denote the class of
  $((b,\sigma),\gamma)$ in $f_{*}\Sigma$ by $[(b,\sigma),\gamma]$.
  Then $[(b,\gamma),\iota(a)\sigma] = [(b+ f(a),\gamma),\sigma]$.
  Since $j(\A)$ has a Haar system by
  Remark~\ref{rmk:Kum-T-extensions}, $f_{*}\Sigma$ is a locally
  compact Hausdorff groupoid by \cite{ikrsw:jfa21}*{Lemma~2.2}.  The
  operations are given by
  \begin{align}
    [(b_1,\gamma_1),\sigma_1][(b_2,\gamma_2),\sigma_2]
    &=
      [(b_1+\gamma_1b_2,\gamma_1\gamma_2),\sigma_1\sigma_2]  \quad
      \text{and} \\ \relax [(b,\gamma),\sigma]^{-1}
    &= [(-\gamma^{-1}\cdot
      b,\gamma^{-1}),\sigma^{-1}].
  \end{align}
  We can identify the unit space with $\goo$ and then
  \begin{align}
    \label{eq:75}
    r([ (b,\gamma),\sigma])=r(\gamma) \quad\text{and}\quad s([
    (b,\gamma),\sigma])=s(\gamma) .
  \end{align}

  To see that $f_*\Sigma$ is a compatible extension by $\B$, let
  \[
    \iota_*(b)=[(b,p_{\B}(b)),p_{\B}(b)]\quad\text{ and }\quad
    p_*([(b,\gamma),\sigma])=\gamma.
  \]
  It is not hard to verify that this satisfies the algebraic
  requirements for an extension. The most difficult one might be the
  inclusion $p_*^{-1}(\goo)\subseteq \iota_*(\B)$ for which we provide
  an outline of the proof: take $[ (b,\gamma ),\sigma]\in f_*\Sigma$
  such that $p_*([ (b,\gamma),\sigma])=u\in\goo$. Then $\gamma=u$,
  giving $\dot\sigma=u$. Since $\Sigma$ is an extension, there exists
  $a\in \A(u)$ such that $\iota(a)=\sigma$. It follows that
  $[( (b,u),\iota(a))]=[( (b+f(a),u),u)]=\iota_*(b+f(a))$. It is easy
  to check that $b+f(a)$ is independent of the choice of the
  representative of $[ (b,\gamma),\sigma]$.

  Since $\iota_{*}$ and $p_{*}$ are clearly continuous and since
  $\iota_{*}$ is easily seen to be a homeomorphism onto its range, we
  just need to see that $p_{*}$ is open.  For this, we apply Fell's
  Criterion (see \cite{ikrsw:jfa21}*{Lemma~3.1}).  Suppose that
  $\gamma_{n}\to \gamma= p_{*}\bigl([(b,\sigma),\gamma]\bigr)$.  Since
  $p:\Sigma\to\G$ is open, we can pass to a subnet, relabel, and
  assume that there are $\sigma_{n}\to \sigma$ in $\Sigma$ such that
  $\dot\sigma_{n}=\gamma_{n}$.  Since $p_{\B}$ is open, we can pass to
  subnet, relabel, and assume there are $b_{n}\to b$ in $\B$ such that
  $p_{\B}(b_{n}) =r(\gamma_{n})$.  Then
  $[(b_{n},\gamma_{n}),\sigma_{n}]\to [(b,\gamma),\sigma]$ as
  required.

  The map $f_*$ is the composition of the embedding of $\Sigma$ into
  $\D$ and the quotient map $\D\mapsto \D/i(\A)$:
  $f_*(\sigma)=[\bigl( ( 0_{r(\sigma)}, p(\sigma) ),\sigma\bigr)]$.
  Since $f$ is $\G$-equivariant, $p_{\B}(f(a))= p_{\A}(a)$ and
  \begin{align}
    \label{eq:76}
    f_{*}(\iota(a))=[(0,p_{\A}(a)),p_{\A}(a)]=
    [(f(a),p_{\B}(f(a))),p_{\B}(f(a))]
    = \iota_{*}(\iota(a)),
  \end{align}
  and \eqref{eq:74} commutes as required.

  Now let $\Sigma'$ be an extension as in~\eqref{eq:84}.  Define
  $\tilde g:\D\to\Sigma'$ by
  $\tilde g((b,\gamma),\sigma)= \iota'(b)f'(\sigma)$.  Since
  \begin{equation}
    \label{eq:6b}
    \iota'(b_{1})f'(\sigma_{1})\iota'(b_{2})f'(\sigma_{2}) =
    \iota'(b_{1}) \iota'(f'(\sigma_{1})\cdot b_{2})
    f'(\sigma_{1})f'(\sigma_{2})
  \end{equation}
  and since $p'(f'(\sigma_{1}))=\dot\sigma_{1}$, it follows that
  $\tilde g$ is a groupoid homomorphism.  On the other hand,
  \begin{align}
    \label{eq:7b}
    \tilde g(\theta(a)) &= \tilde g((-f(a),p_{\A}(a)),\iota(a))
                          = \iota'(-f(a))f'(\iota(a))
                          = \iota'(-f(a))\iota'(f(a))
    \\
                        &= \iota'(p_{\A}(a)).
  \end{align}
  Hence $\tilde g$ factors through a homomorphism
  $g:f_{*}\Sigma\to\Sigma'$.  Clearly, $g(\iota_{*}(b))=\iota'(b)$ and
  $p'\circ g=p_{*}$, so $g$ makes the diagram analogous
  to~\eqref{eq:73} commute. We have $g\circ f_{*}=f'$ by construction.

  To see that $g$ is a proper isomorphism, we still need to see that
  $g$ is an isomorphism with a continuous inverse.

  For this, fix $\alpha\in\Sigma'$.  There exists $\sigma\in\Sigma$
  such that $p(\sigma)=p'(\alpha)$.  Using \eqref{eq:84}, there exists
  $b\in\B$ such that $\alpha=\iota'(b)f'(\sigma)$. So $\tilde{g}$, and
  hence also $g$, is onto.

  Now suppose that $\iota'(b)f'(\sigma)$ is a unit.  Then
  $f'(\sigma)= \iota'(-b)$.  Hence $p'(f'(\sigma))$ is a unit, and
  $\sigma=\iota(a)$ for some $a\in\A$.  But then
  $\iota'(-b)=f'(\sigma)=f'(\iota(a))=\iota'(f(a))$.  Hence,
  $b=-f(a)$.  That is,
  \begin{equation}
    \label{eq:8b}
    ((b,p(\sigma)),\sigma)=((-f(a),p_{\A}(a)),\iota(a))\in\theta(\A).
  \end{equation}
  Thus $g$ is injective.

  To see that $g$ is an isomorphism of topological groupoids, it
  suffices to see that $g$ is open.  We use Fell's criterion.  So
  suppose that $g(\alpha_{i})\to g(\alpha)$ where
  $\alpha_i=[(b_i,p(\sigma_i)),\sigma_i]$ and
  $\alpha=[(b,p(\sigma)),\sigma]\in f_*\Sigma$.  Since
  $p'\circ g=p_{*}$, we have $p(\sigma_{i})\to p(\sigma)$.  Since $p$
  is open, we can pass to a subnet, relabel, and assume there exist
  $a_{i}\in\A$ such that $\iota(a_{i})\sigma_{i} \to \sigma$.  But
  \begin{equation}
    \label{eq:9b}
    \alpha_{i}=[(-f(a_{i})+b_{i}),p(\sigma_{i}),\iota(a_{i})\sigma_{i}],
  \end{equation}
  and then
  \begin{equation}
    \label{eq:10b}
    \iota'(-f(a_{i})+b_{i})f'(\iota(a_{i})\sigma_{i})\to
    \iota'(b)f'(\sigma).
  \end{equation}
  It follows that
  \begin{equation}
    \label{eq:11c}
    \iota'(-f(a_{i})+b_{i}) \to \iota'(b).
  \end{equation}
  Since $\iota'$ is a homeomorphism onto its range,
  $\alpha_{i}\to\alpha$ as required.
\end{proof}

\begin{cor}\label{cor:compositions}
  Let $\A$, $\B$ and $\mathcal C$ be locally compact abelian group
  $\G$-bundles. Let $f:\A\to \B$ and $g: \B \to \mathcal C$ be
  continuous $\G$-equivariant maps.  Assume that $\Sigma$ is a
  compatible extension of $\G$ by $\A$.  Then $(g \circ f)_*\Sigma$ is
  properly isomorphic to $g_*(f_*\Sigma)$.
\end{cor}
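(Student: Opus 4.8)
The plan is to invoke the uniqueness clause of Theorem~\ref{thm:functor} rather than to construct an isomorphism by hand. We have two candidate pushouts of $\Sigma$ along the composite $g\circ f\colon\A\to\CC$: on one hand the canonical extension $(g\circ f)_*\Sigma$ with its structure map $(g\circ f)_*\colon\Sigma\to(g\circ f)_*\Sigma$, and on the other hand the iterated construction $g_*(f_*\Sigma)$ together with the homomorphism $\Phi:=g_*\circ f_*\colon\Sigma\to g_*(f_*\Sigma)$. To apply the uniqueness statement we must check that $g_*(f_*\Sigma)$, equipped with $\Phi$, fits into a commuting diagram of the shape~\eqref{eq:84} with $\A$, $\Sigma$, $\CC$ in the roles of $\A$, $\Sigma$, $\B$ and with $g\circ f$ in the role of $f$; the uniqueness clause then furnishes a proper isomorphism $h\colon(g\circ f)_*\Sigma\to g_*(f_*\Sigma)$ with $h\circ(g\circ f)_*=\Phi$, which is exactly the assertion of the corollary.

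Concretely, I would first record that $g_*(f_*\Sigma)$ is a compatible extension of $\G$ by $\CC$: this is immediate from Theorem~\ref{thm:functor} applied twice, since $f_*\Sigma$ is a compatible extension of $\G$ by $\B$ and $g$ is $\G$-equivariant, so $g_*(f_*\Sigma)$ is a compatible extension of $\G$ by $\CC$. Next I would verify that the composite $\Phi=g_*\circ f_*$ is a groupoid homomorphism (it is a composite of homomorphisms) making the relevant diagram commute. The commutativity amounts to two identities: $p_{**}\circ\Phi=p$, and $\Phi\circ\iota=\iota_{**}\circ(g\circ f)$, where $p_{**}$ and $\iota_{**}$ denote the structure maps of $g_*(f_*\Sigma)$. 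The first follows by pasting the two triangles $p_*\circ f_*=p$ (from~\eqref{eq:74} for $f$) and $p_{**}\circ g_*=p_*$ (from~\eqref{eq:74} for $g$). For the second, I would chase $\iota$ through both constructions: $\Phi(\iota(a))=g_*\bigl(f_*(\iota(a))\bigr)=g_*\bigl(\iota_*(f(a))\bigr)=\iota_{**}\bigl(g(f(a))\bigr)$, using the commuting square of~\eqref{eq:74} for $f$ at the middle step and for $g$ at the last step. This shows the diagram~\eqref{eq:84} commutes with $f'$ replaced by $\Phi$.

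With these checks in place the uniqueness half of Theorem~\ref{thm:functor} applies verbatim and produces the desired proper isomorphism $g_*(f_*\Sigma)\cong(g\circ f)_*\Sigma$ (respecting the structure maps into $\CC$ and from $\Sigma$). I expect the only mild subtlety to be bookkeeping: making sure the $\G$-actions on $\B$ and $\CC$ used in forming $f_*\Sigma$ and then $g_*(f_*\Sigma)$ are literally the ones with respect to which $g$, and $g\circ f$, are equivariant, and that ``compatible'' in each stage refers to conjugation inducing exactly that action. All of this is guaranteed by the ``inducing the given $\G$-action'' part of Theorem~\ref{thm:functor}, so no new analytic input (openness, Haar systems, Fell's criterion) is needed here — those were already discharged inside Theorem~\ref{thm:functor}. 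Thus the proof is essentially a formal diagram chase, and there is no substantive obstacle; one could alternatively give an explicit isomorphism sending the class of $((c,\gamma),\sigma)$ in $(g\circ f)_*\Sigma$ to the class of $\bigl((c,\gamma),[(0,\gamma),\sigma]\bigr)$ in $g_*(f_*\Sigma)$ and check it is well defined, bijective, and bicontinuous, but appealing to uniqueness is cleaner.
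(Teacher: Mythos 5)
Your proposal is correct and is essentially the paper's own argument: the paper simply cites the uniqueness clause of Theorem~\ref{thm:functor}, and your verification that $\Phi=g_*\circ f_*$ makes the diagram~\eqref{eq:84} commute (with $g\circ f$ in place of $f$) is exactly the routine pasting that this citation leaves implicit. No further comment is needed.
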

\begin{proof}
  This follows from the uniqueness of $(g \circ f)_*\Sigma$ up to
  proper isomorphism guaranteed by Theorem~\ref{thm:functor}.
\end{proof}.

\section{The Extension Group $\Tga$}
\label{sec:extension-group-tga}

As in \cite{kum:jot88}*{\S2}, we can use our pushout construction to
introduce a binary operation on $\Tga$.  Suppose that
$[\Sigma],[\Sigma']\in \Tga$.  Define $\nabla^{\A}:\A*\A\to\A$
by $\nabla^{\A}(a,a')=a+a'$. Proper isomorphisms $f : \Sigma \to \Gamma$
and $f' : \Sigma' \to \Gamma'$ of compatible extensions of $\A$ by $\G$ determine a
proper isomorphism $f * f' : \Sigma*\Sigma' \to \Gamma*\Gamma'$ of extensions by $\A * \A$. 
The uniqueness assertion of Theorem~\ref{thm:functor} then yields a
proper isomorphism $\nabla^{\A}_{*}(\Sigma*_{\G}\Sigma') \to
\nabla^{\A}_{*}(\Gamma*_{\G}\Gamma')$. Hence the formula 
\begin{align}
  \label{eq:85}
  [\Sigma]+[\Sigma'] :=
  [\nabla^{\A}_{*}(\Sigma*_{\G}\Sigma')]
\end{align}
is well defined.

\begin{example}\label{ex-iden}
  Let $[\Sigma]\in \Tga$. Let
  $\SD(\A, \G)$ be the semidirect product defined in
  Example~\ref{ex-semi-direct}. Define $g : (\SD(\A,\G))*_{\G}\Sigma 
  \to \Sigma$ by $g((a,\dot\sigma),\sigma)=\iota(a)\sigma$. We obtain 
  a commutative diagram
  \begin{equation}
    \label{eq:78}
    \begin{tikzcd}[row sep = 1ex, column sep = 2cm]
      \A*\A\arrow[r,"\iota*\iota"] \arrow[dd,"\nabla^{\A}",swap]&
      (\SD(\A,\G))*_{\G}\Sigma
      \arrow[dr,"\tilde p"] \arrow[dd,"g",swap]\\
      &&\G. \\
      \A\arrow[r,"\iota",swap]&\Sigma \arrow[ur,"p",swap]
    \end{tikzcd}
  \end{equation}
  The uniqueness assertion in Theorem~\ref{thm:functor} implies that
  $\nabla^{\A}_{*}((\SD(\A,\G))*_{\G}\Sigma)$ is properly isomorphic to
  $\Sigma$.  In other words, $[\SD(\A,\G)]+[\Sigma]=[\Sigma]$.
\end{example}

\begin{example}
  \label{ex-theta}
  Let $\A\overset{\iota}\longrightarrow \Sigma
  \overset{p}\longrightarrow \G$ be a compatible extension.  Then we
  obtain another 
  compatible extension $\A\overset{\iota'}\longrightarrow \Sigma
  \overset{p}\longrightarrow \G$ by letting
  $\iota'(a)=\iota(-a)=\iota(a)^{-1}$.  We 
  will write $\Sigma^{-1}$ for $\Sigma$ viewed as this alternate
  extension.  Define $\theta:\A\to\A$ by $\theta(a)=-a$.  
  Then $\theta$ is $\G$-invariant.  Since the diagram
  \begin{equation}
    \label{eq:81}
    \begin{tikzcd}
      [row sep = 1ex, column sep = 2cm]
      \A\arrow[r,"\iota"]\arrow[dd,"\theta"] &\Sigma \arrow[dd,"\id"]
      \arrow[rd,"p"] \\
      &&\G \\
      \A \arrow[r,"\iota'"] &\Sigma^{-1}\arrow[ru,"p"] \\
    \end{tikzcd}
  \end{equation}
  commutes, we can identify $[\theta_{*}\Sigma]$ with $[\Sigma^{-1}]$
  by Theorem~\ref{thm:functor}.
\end{example}

\begin{example}\label{ex-inv}
  Take $[\Sigma] \in \Tga$ and let
  $\SD(\A, \G)$ be the semidirect product. The map
  $g:\Sigma*\Sigma^{-1}\to \SD(\A,\G)$ given by
  $g(\sigma,\tau)= (\iota^{-1}(\sigma\tau^{-1}),\dot\sigma)$ is a
  homomorphism. Since the diagram
  \begin{equation}
    \label{eq:79}
    \begin{tikzcd}
      [row sep = 1ex, column sep = 2cm] \A*\A\arrow[r,"\iota*\iota'"]
      \arrow[dd,"\nabla^{\A}",swap]& \Sigma*_{\G}\Sigma^{-1}
      \arrow[dr,"\tilde p"] \arrow[dd,"g",swap]\\
      &&\G \\
      \A\arrow[r,"\iota",swap]&\SD(\A,\G) \arrow[ur,"p",swap]
    \end{tikzcd}
  \end{equation}
  commutes, we see that $[\Sigma]+[\Sigma^{-1}]=[\SD(\A,\G)]$ for all
  $\Sigma\in \Tga$.
\end{example}

\begin{example}\label{ex-comm}
  Take $[\Sigma],[\Sigma']\in\Tga$.  Let
  $\tilde f:\Sigma*_{\G}\Sigma' \to \Sigma'*_{\G}\Sigma$ be the flip.
  Similarly, let $f:\A*\A\to\A*\A$ be given by $f(a,a')=(a',a)$.  The
  diagram
  \begin{equation}
    \label{eq:86}
    \begin{tikzcd}[row sep = 4ex, column sep = 2cm]
      \A*\A \arrow[r,"\iota*\iota'"] \arrow[d,"f",swap] &
      \Sigma*_{\G}\Sigma' \arrow[d,"\tilde f",swap] \arrow[rd,"\tilde
      p"]
      \\
      \A*\A \arrow[r,"\iota'*\iota"] \arrow[d,"\nabla^{\A}",swap] &
      \Sigma'*_{\G} \Sigma \arrow[r,,"\tilde p"]
      \arrow[d,"\nabla^{\A}_{*}",swap]
      & \G \\
      \A \arrow[r,"i"] & \nabla_{*}(\Sigma'*_{\G}\Sigma)
      \arrow[ru,"p",swap]
    \end{tikzcd}
  \end{equation}
  commutes.  Since $\nabla^{\A} \circ f = \nabla^{\A}$, it follows
  from Theorem~\ref{thm:functor} that
  $[\Sigma]+[\Sigma']=[\Sigma']+[\Sigma]$.
\end{example}

In Examples~\ref{ex-iden}--\ref{ex-comm}, we have proved much of the following theorem, which is patterned on 
\cite{kum:jot88}*{Theorem 2.7}.

\begin{thm}\label{thm:functoriali}
  Let $\G$ be a locally compact groupoid with open range and source
  maps, and let
  $\A$ be a locally compact abelian group $\G$-bundle.  Then the
  binary operation
  $([\Sigma_1], [\Sigma_2]) \mapsto [\nabla^{\A}_{*}(\Sigma_1 *_\G
  \Sigma_2)]$ of~\eqref{eq:85} makes $\Tga$ into an abelian group with
  neutral element given by the class $[\SD(\A, \G)]$ of the
  semidirect product of Example~\ref{ex-semi-direct}, and
  $[\Sigma]^{-1}=[\Sigma^{-1}]$ as in Example~\ref{ex-theta}.   
  For each continuous  $\G$-equivariant map $f:\A\to\B$ of $\G$-bundles,
  define $\Tg(f):\Tg(\A)\to\Tg(\B)$ to be the induced map: 
  $\Tg(f)[\Sigma]=[f_{*}\Sigma]$. Then $\Tg$ is a functor from the category 
  of $\G$-bundles to the category of abelian groups. 
\end{thm}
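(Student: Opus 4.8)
The plan is to collect the group axioms that were essentially established in Examples~\ref{ex-iden}--\ref{ex-comm} and then supply the two pieces that remain: associativity of the operation~\eqref{eq:85}, and the claim that $\Tg$ is a functor valued in abelian groups. Recall what is already in hand: the operation $([\Sigma],[\Sigma'])\mapsto[\nabla^{\A}_{*}(\Sigma*_{\G}\Sigma')]$ is well defined (checked just before Example~\ref{ex-iden}); $[\SD(\A,\G)]+[\Sigma]=[\Sigma]$ (Example~\ref{ex-iden}); $[\Sigma]+[\Sigma^{-1}]=[\SD(\A,\G)]$ with $[\Sigma^{-1}]=[\theta_{*}\Sigma]$ (Examples~\ref{ex-theta} and~\ref{ex-inv}); and $[\Sigma]+[\Sigma']=[\Sigma']+[\Sigma]$ (Example~\ref{ex-comm}). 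Once associativity is proved, commutativity upgrades the one-sided identity and inverse statements to two-sided ones, so $\Tga$ is an abelian group with the stated neutral element and inverses.

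The device underlying everything else is a lemma asserting that the pushout construction commutes with fibred products: \emph{if $f_{i}\colon\A_{i}\to\B_{i}$ are continuous $\G$-equivariant maps and $\Sigma_{i}$ is a compatible extension of $\G$ by $\A_{i}$ for $i=1,2$, then $(f_{1}*f_{2})_{*}(\Sigma_{1}*_{\G}\Sigma_{2})$ is properly isomorphic, as an extension by $\B_{1}*\B_{2}$, to $(f_{1*}\Sigma_{1})*_{\G}(f_{2*}\Sigma_{2})$.} I would prove this by observing that $(\sigma_{1},\sigma_{2})\mapsto(f_{1*}\sigma_{1},f_{2*}\sigma_{2})$ is a well-defined groupoid homomorphism $\Sigma_{1}*_{\G}\Sigma_{2}\to(f_{1*}\Sigma_{1})*_{\G}(f_{2*}\Sigma_{2})$---well defined precisely because the pushout diagrams~\eqref{eq:74} commute---which fits into the analogue of diagram~\eqref{eq:84} over the base map $f_{1}*f_{2}$; the uniqueness clause of Theorem~\ref{thm:functor} then produces the asserted proper isomorphism. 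Taking $f_{i}=\id_{\A_{i}}$ here also records that $(\id_{\A})_{*}\Sigma$ is properly isomorphic to $\Sigma$, since $\Sigma$ itself (with $\iota$ and $p$) satisfies the universal property of $(\id_{\A})_{*}\Sigma$.

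For associativity I would bring in the iterated fibred product $\Sigma_{1}*_{\G}\Sigma_{2}*_{\G}\Sigma_{3}$, which is canonically properly isomorphic to both $(\Sigma_{1}*_{\G}\Sigma_{2})*_{\G}\Sigma_{3}$ and $\Sigma_{1}*_{\G}(\Sigma_{2}*_{\G}\Sigma_{3})$ by re-association of triples, together with the fold map $\nabla^{(3)}\colon\A*\A*\A\to\A$, $(a_{1},a_{2},a_{3})\mapsto a_{1}+a_{2}+a_{3}$, noting that $\nabla^{\A}\circ(\nabla^{\A}*\id_{\A})=\nabla^{(3)}=\nabla^{\A}\circ(\id_{\A}*\nabla^{\A})$ under the identification $(\A*\A)*\A=\A*\A*\A=\A*(\A*\A)$. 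The lemma gives $\nabla^{\A}_{*}(\Sigma_{1}*_{\G}\Sigma_{2})*_{\G}\Sigma_{3}\cong(\nabla^{\A}*\id_{\A})_{*}\bigl((\Sigma_{1}*_{\G}\Sigma_{2})*_{\G}\Sigma_{3}\bigr)$; applying $\nabla^{\A}_{*}$ and Corollary~\ref{cor:compositions} then yields $([\Sigma_{1}]+[\Sigma_{2}])+[\Sigma_{3}]=[\nabla^{(3)}_{*}(\Sigma_{1}*_{\G}\Sigma_{2}*_{\G}\Sigma_{3})]$. The mirror-image computation gives the same class for $[\Sigma_{1}]+([\Sigma_{2}]+[\Sigma_{3}])$, so $+$ is associative. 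For the functoriality statement, $\Tg(\id_{\A})=\id$ was noted above and $\Tg(g\circ f)=\Tg(g)\circ\Tg(f)$ is exactly Corollary~\ref{cor:compositions}, so $\Tg$ is a functor; to see each $\Tg(f)$ is a group homomorphism I would use $f\circ\nabla^{\A}=\nabla^{\B}\circ(f*f)$, whence by Corollary~\ref{cor:compositions} $f_{*}(\nabla^{\A}_{*}(\Sigma_{1}*_{\G}\Sigma_{2}))\cong\nabla^{\B}_{*}\bigl((f*f)_{*}(\Sigma_{1}*_{\G}\Sigma_{2})\bigr)$, and then the lemma identifies $(f*f)_{*}(\Sigma_{1}*_{\G}\Sigma_{2})$ with $(f_{*}\Sigma_{1})*_{\G}(f_{*}\Sigma_{2})$, giving $\Tg(f)([\Sigma_{1}]+[\Sigma_{2}])=\Tg(f)[\Sigma_{1}]+\Tg(f)[\Sigma_{2}]$; preservation of the neutral element is then automatic.

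The main obstacle I anticipate is organisational rather than conceptual: making sure the maps fed into Theorem~\ref{thm:functor} are honest groupoid homomorphisms over the intended base maps, and handling the canonical identifications $(\A*\A)*\A\cong\A*\A*\A$ and of iterated $*_{\G}$-products without circularity. Once the lemma that $*_{\G}$ commutes with the pushout is in place, associativity and the homomorphism property of $\Tg(f)$ are purely formal consequences of Theorem~\ref{thm:functor} and Corollary~\ref{cor:compositions}, following the template of \cite{kum:jot88}*{Theorem 2.7}.
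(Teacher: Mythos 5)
Your proposal is correct and follows essentially the same route as the paper, whose own proof is only a sketch: it appeals to Examples~\ref{ex-iden}--\ref{ex-comm} for the identity, inverses and commutativity, to ``diagrams similar to that in Example~\ref{ex-comm}'' for well-definedness and associativity, and to \cite{kum:jot88}*{Theorem 2.7} for the homomorphism property of $\Tg(f)$. Your lemma that the pushout commutes with $*_\G$ (proved via the uniqueness clause of Theorem~\ref{thm:functor}), together with the triple fold map $\nabla^{(3)}$ and the identity $f\circ\nabla^{\A}=\nabla^{\B}\circ(f*f)$, is precisely the Baer-sum bookkeeping the paper leaves implicit, so you have correctly supplied the details where the paper defers them.
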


\begin{proof}
  By considering diagrams similar to that in Example~\ref{ex-comm}, we
  see that the operation in \eqref{eq:85} is well-defined and
  associative.  We saw that $[\SD(\A,\G)]$ acts as an identity in
  Example~\ref{ex-iden} and the statement about inverses follows from
  Example~\ref{ex-inv}.  The computation in Example~\ref{ex-comm} 
  shows that $\Tga$ is an abelian group.
   
  By Corollary~\ref{cor:compositions} we have
  $\Tg(f \circ g) = \Tg(f) \circ \Tg(g)$ if $f$ and $g$ are a
  composable pair of continuous $\G$-equivariant maps of $\G$-bundles.
  The proof that $\Tg(f)$ is a group homomorphism follows as in the proof of
  \cite{kum:jot88}*{Theorem 2.7}.
\end{proof}

\section{Applications and Examples}
\label{sec:appl-cocycl}

In this section we consider a unit space fixing extension $\Sigma$
of $\G$ by the group bundle $\A$ as illustrated in  the
diagram~(\ref{eq:formal-ext-intro}) from the introduction. We review
the basic details.  We assume that all groupoids considered in this
section are second-countable locally compact Hausdorff groupoids with
Haar systems.  The Haar system on $\Sigma$ is denoted
$\lambda=\sset{\lambda^{u}}_{u\in\go}$ and we further assume that
$p_{\A}:\A\to\go$ is a bundle of abelian groups that is a closed
subgroupoid of $\Sigma$.  It is equipped with a Haar system denoted
$\beta=\sset{\beta^{u}}_{u\in\go}$ and the fibers are denoted
$\A(u)$ for $u\in\go$.  The existence of a Haar system on $\A$
implies that $p_{\A}$ is open.  It follows by
\cite{ikrsw:jfa21}*{Lemma 2.6(c)} that there is a Haar system
$\alpha=\sset{\alpha_{u}}_{u\in\go}$ on $\G$ such that for all
$f\in\cc(\Sigma)$ and $u\in\go$ we have
\begin{equation}
  \label{eq:harhar}
  \int_{\Sigma}f(\sigma)\,d\lambda^{u}(\sigma) = \int_{\G}
  \int_{\A} f(\sigma a)\,d\beta^{s(\sigma)}(a)\, d\alpha^{u}(\dot \sigma).
\end{equation}
Moreover, there is a natural action of $\Sigma$, and therefore $\G$,
on $\A$.

Note that $p:\Sigma\to \G$ is a continuous, open surjection inducing a
homeomorphism from $\go$ onto $\goo$, and $\iota:\A\to\Sigma$ is a
homeomorphism onto $\ker p$.  (Both $p$ and $\iota$ are assumed to be
groupoid morphisms).

Recall that if $\Sigma$ is a $\T$-groupoid over $\G$ then
\[
C_c(\G; \Sigma)  := \{ f \in C_c(\Sigma) : f(t\sigma) = tf(\sigma) \text{ for all } t \in \T, \sigma \in \Sigma \}
\]
is a $^*$-algebra under the operations described in \cite{muhwil:ms92}*{\S2}, and $\cs(\G; \Sigma)$ is its closure
in the norm obtained by taking the supremum of the operator norm under all $*$-representations. 

We may also view $C_c(\G; \Sigma)$ as compactly supported continuous sections of the one-dimensional Fell line bundle over $\G$ associated to $\Sigma$.  
One can then construct the associated (right) Hilbert $C_0(\goo)$-module (see \cite{ikrsw:jfa21}*{\S 1.3}) as the completion of 
$C_c(\G; \Sigma)$ in the norm arising from the $C_0(\goo)$-valued pre-inner product given by 
$\langle f, g \rangle := (f^**g)|_{\G^{(0)}}$ for all $f, g \in C_c(\G; \Sigma)$.  
We denote the Hilbert module by $\H(\G; \Sigma)$ and observe that left multiplication induces a natural $*$-homomorphism 
$\lambda: C_c(\G; \Sigma) \to \L(\H(\G; \Sigma))$.  
We may define the reduced norm of an element $f \in C_c(\G; \Sigma)$  to be the operator norm of its image: 
$\|f\|_r := \| \lambda(f) \|$.   
Then $\cs_r(\G; \Sigma)$ is the closure of $C_c(\G; \Sigma)$ in the reduced norm.

\begin{lemma}\label{lem:inv-clopen}
With notation as above, let $F \subset \goo$ be a $\G$-invariant clopen subset.  
Then $F$ is also $\Sigma$-invariant and the reduction $\Sigma\restr{F}$ is a twist over the reduction $\G\restr F$.
Moreover, the characteristic function of $F$ determines a central multiplier projection $p_F$ such that
   \[
   p_F\cs_r(\G; \Sigma) \cong  \cs_r(\G\restr F ; \Sigma\restr F).
   \] 
\end{lemma}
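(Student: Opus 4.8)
The plan is to verify the statement in three stages: first the $\Sigma$-invariance of $F$, then the twist property of the reduction, and finally the multiplier projection isomorphism.

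First I would check that $F$ is $\Sigma$-invariant. Since $p : \Sigma \to \G$ induces a homeomorphism of $\go$ onto $\goo$, we may identify the unit spaces, and then the range and source maps of $\Sigma$ satisfy $r_\Sigma = r_\G \circ p$ and $s_\Sigma = s_\G \circ p$ (as subsets of $\goo$ under this identification). Hence for $\sigma \in \Sigma$ with $s(\sigma) \in F$ we have $s(p(\sigma)) \in F$, so by $\G$-invariance $r(p(\sigma)) = r(\sigma) \in F$; this shows $\Sigma\restr F := \{\sigma \in \Sigma : r(\sigma), s(\sigma) \in F\} = \{\sigma : s(\sigma) \in F\}$ is a subgroupoid, and it is clopen in $\Sigma$ since $F$ is clopen in $\goo$ and $s_\Sigma$ is continuous (so $\Sigma\restr F$ is again locally compact Hausdorff, with Haar system obtained by restricting $\lambda$). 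That $\Sigma\restr F$ is a twist over $\G\restr F$ is then immediate: restricting the maps $\iota$ and $p$ to the preimages of $F$ gives an extension of $\G\restr F$ by $\goo\restr F \times \T$, and the restricted maps retain openness (restriction to a clopen invariant set) and the defining central property $\iota(r(\sigma),z)\sigma = \sigma\iota(s(\sigma),z)$, so we obtain a $\T$-groupoid over $\G\restr F$ in the sense of \cite{kum:lnim83}.

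Next I would treat the multiplier projection. Let $\mathbf{1}_F$ denote the characteristic function of $F$; since $F$ is clopen, $\mathbf{1}_F \in C_0(\goo) = C_0(\go)$, which acts as central multipliers of both $C_c(\G;\Sigma)$ and its reduced completion by pointwise multiplication on the base (using the Fell-bundle/section picture: $(\mathbf{1}_F \cdot f)(\sigma) = \mathbf{1}_F(r(\sigma)) f(\sigma)$, and this agrees with $f \mapsto \mathbf{1}_F(s(\sigma))f(\sigma)$ on $C_c(\G;\Sigma)$ precisely because $F$ is $\G$-invariant). So $p_F := \mathbf{1}_F$ is a central multiplier projection of $\cs_r(\G;\Sigma)$. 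On the dense subalgebra level, $p_F \cdot C_c(\G;\Sigma) = \{f \in C_c(\G;\Sigma) : \supp f \subseteq \G\restr F\} = C_c(\G\restr F; \Sigma\restr F)$, using that $\G\restr F$ is clopen. I would then identify $p_F\cs_r(\G;\Sigma)$ with the closure of this subalgebra in the reduced norm of $\cs_r(\G;\Sigma)$ and show this closure is $\cs_r(\G\restr F;\Sigma\restr F)$, i.e. that the reduced norm of $\cs_r(\G;\Sigma)$ restricted to $C_c(\G\restr F;\Sigma\restr F)$ agrees with the intrinsic reduced norm of the smaller twist.

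The main obstacle is exactly this last point: matching the two reduced norms. The clean way is via the Hilbert module $\H(\G;\Sigma)$. Because $F$ is $\G$-invariant and clopen, $p_F$ also acts as a projection on $\H(\G;\Sigma)$ (multiplication by $\mathbf{1}_F$ on the base of $\goo$), and $p_F\H(\G;\Sigma)$ is the completion of $C_c(\G\restr F;\Sigma\restr F)$ in the $C_0(F)$-valued inner product $\langle f,g\rangle = (f^**g)\restr F$ — which is visibly the defining inner product of $\H(\G\restr F;\Sigma\restr F)$, since the convolution $f^**g$ of sections supported on $\G\restr F$ is again supported there and is computed by the same integral formula (the Haar system on $\Sigma\restr F$ being the restriction of $\lambda$). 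Thus $p_F\H(\G;\Sigma) \cong \H(\G\restr F;\Sigma\restr F)$ as Hilbert $C_0(F)$-modules, and the left action of $C_c(\G\restr F;\Sigma\restr F)$ on this submodule is the restriction of $\lambda$; since $p_F$ is central, $\L(p_F\H(\G;\Sigma)) = p_F\L(\H(\G;\Sigma))p_F$, so the operator norm of $f \in C_c(\G\restr F;\Sigma\restr F)$ computed in $\cs_r(\G;\Sigma)$ equals the one computed in $\cs_r(\G\restr F;\Sigma\restr F)$. Taking closures gives $p_F\cs_r(\G;\Sigma) = \cs_r(\G\restr F;\Sigma\restr F)$. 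The remaining care is bookkeeping: checking that restriction of sections genuinely intertwines the convolution products and that no completeness subtlety arises in identifying $\overline{p_F C_c(\G;\Sigma)}$ with $p_F\overline{C_c(\G;\Sigma)}$ — routine, but worth stating.
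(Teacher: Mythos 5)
Your proposal is correct and follows essentially the same route as the paper: the paper likewise decomposes $\H(\G;\Sigma)$ as $\H(\G\restr F;\Sigma\restr F)\oplus\H(\G\restr{F^c};\Sigma\restr{F^c})$, identifies $p_F$ with the central projection onto the first summand, and concludes that the operator norm of $C_c(\G\restr F;\Sigma\restr F)$ on the summand agrees with its norm on all of $\H(\G;\Sigma)$ because it acts trivially on the complement. The only difference is that you also spell out the preliminary invariance and twist-restriction facts, which the paper treats as immediate.
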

\begin{proof}
Observe that $\H(\G; \Sigma)$ decomposes as the direct sum of a Hilbert $C_0(F)$-module and a 
Hilbert $C_0(F^c)$-module in the following way
\[
\H(\G; \Sigma) \cong \H(\G\restr F; \Sigma\restr F) \oplus  \H(\G\restr {F^c}; \Sigma\restr {F^c}).
\]
Note that multiplication by the characteristic function of $F$, which we denote by $p_F$  is the projection onto the first component,
 that  $p_F$ is in the center of the multiplier algebra of $\cs_r(\G; \Sigma)$, and 
$C_c(\G\restr F; \Sigma\restr F)$ acts trivially on the second component.
Hence the operator norm of $C_c(\G\restr F; \Sigma\restr F)$ acting on $\H(\G\restr F; \Sigma\restr F)$
coincides with that of its action on $\H(\G; \Sigma)$.
\end{proof}
\subsection{The $\T$-groupoid of an extension}
\label{sec:t-groupoid}

As noted in the introduction, we want to see that
the $\T$-groupoid constructed in
\cite{ikrsw:jfa21}*{\S3.1} is an example of the pushout construction
of Theorem~\ref{thm:functor}.  The \cs-algebra
$\cs(\A)$ is abelian and the Gelfand dual of $\cs(\A)$ is an abelian group
bundle $\hp:\hA\to\goo=\go$ with fibres
$\hp^{-1}(\sset u)\cong \A(u)^{\wedge}$
(see \cite{mrw:tams96}*{Corollary~3.4}).  Furthermore, since abelian groups
are amenable, it follows from \cite{wil:toolkit}*{Corollary~5.39} and
\cite{wil:crossed}*{Proposition~C.10} that $\hp$ is open.  Therefore
we can view $\hA$ as a right $\G$-bundle for the natural right action of
$\G$ on $\hA$.  

Since $\G$ and $\Sigma$ both act on $\hA$,
regarded as a topological space fibered over $\go$, we can form the
transformation groupoids $\hA\rtimes \G$ and $\hA\rtimes \Sigma$. 
Moreover, $\hA*\A= \set{(\chi,a):\hp(\chi)=p_{\A}(a)}$ is  a 
$\hA\rtimes \G$-bundle (as well as an $\hA\rtimes \Sigma$-bundle). Defining 
$\iota_* : \hA * \A \to \hA \rtimes \Sigma$ by $\iota_*(\chi,a)=(\chi,a)$ and 
$p_* : \hA \rtimes \Sigma \to \hA \rtimes \G$ by 
$p_*(\chi,\sigma)=(\chi,\dot\sigma)$, we obtain an extension
\begin{equation}\label{eq:ext1}
  \begin{tikzcd}[column sep=3cm]
    \hA* \A \arrow[r,"\iota_*", hook] \arrow[dr,shift left, bend right
    = 15] \arrow[dr,shift right, bend right = 15]&\hA\rtimes\Sigma
    \arrow[r,"p_*", two heads] \arrow[d,shift left] \arrow[d,shift
    right]&\hA\rtimes\G. \arrow[dl,shift left, bend left = 15]
    \arrow[dl,shift right, bend left = 15]
    \\
    &\hA&
  \end{tikzcd}
\end{equation}
We defined a $\T$-groupoid $\Sigmaw$ associated to this extension in
\cite{ikrsw:jfa21}*{Proposition~3.2} as follows. Define
\[
  \D=\set{(\chi,z,\sigma)\in\hA\times\T\times\Sigma:
    \hp(\chi)=r(\sigma)}
\]
and let $H$ be the subgroupoid of $\D$ consisting of triples of the
form $(\chi,\overline{\chi(a)},a)$ for $a\in \A(\hp(\chi))$. Then $H$
is a normal subgroupoid of $\D$ and we can form the locally compact
Hausdorff groupoid $\Sigmaw:=\D/H$
(we use the notation $\Sigmaw$, rather than the notation 
$\widehat\Sigma$ of \cite{ikrsw:jfa21}, to avoid clashing with 
classical notational conventions when $\Sigma$ is a group, for 
example in Remark~\ref{rem-mackey}).

\begin{thm}
  \label{thm:pushout}
  Let $\Sigma$ be the extension of $\G$ by the group bundle $\A$ as in
  the diagram~(\ref{eq:formal-ext-intro}) and adopt the notation established
  above.  Let $f:\hA *\A\to \hA\times \T$ be the canonical map given by
  \begin{equation}
    \label{eq:canmap}
    f(\chi,a)=( \chi,{\chi(a)}).
  \end{equation}
  Then $\Sigmaw$ is properly isomorphic to the pushout
  $f_{*}(\hA\rtimes \Sigma)$. Moreover,
  \[
    \cs(\Sigma) \cong \cs(\hA\rtimes\G; f_{*}(\hA\rtimes \Sigma))
    \quad\text{and} \quad \cs_r(\Sigma) \cong \cs_r(\hA\rtimes\G;
    f_{*}(\hA\rtimes \Sigma)).
  \]
\end{thm}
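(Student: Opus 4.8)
The plan is to prove the isomorphism $\Sigmaw \cong f_*(\hA\rtimes\Sigma)$ directly at the level of groupoids, and then invoke the already-established Theorem~\ref{thm:pushout}-precursor, namely \cite{ikrsw:jfa21}*{Theorem~3.4}, to transfer the \cs-algebra conclusions. First I would observe that both $\Sigmaw$ and $f_*(\hA\rtimes\Sigma)$ are quotients of essentially the same fibred-product groupoid. On the one hand, $\Sigmaw = \D/H$ with $\D = \set{(\chi,z,\sigma) : \hp(\chi)=r(\sigma)}$ and $H = \set{(\chi,\overline{\chi(a)},a)}$. On the other hand, unravelling the pushout construction of Theorem~\ref{thm:functor} applied to the extension~\eqref{eq:ext1} and the map $f(\chi,a)=(\chi,\chi(a))$, the pushout $f_*(\hA\rtimes\Sigma)$ is $\bigl((\SD(\hA\times\T,\,\hA\rtimes\G))*_{\hA\rtimes\G}(\hA\rtimes\Sigma)\bigr)/\theta(\hA*\A)$, where $\theta(\chi,a) = \bigl((\chi,\overline{\chi(a)}),(\chi,a)\bigr)$ after identifying the base point $(\chi,\hp(\chi))$ in $\SD(\hA\times\T,\hA\rtimes\G)$ with $\chi$. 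So the main step is to write down the obvious map sending the class of $\bigl((\chi,z),(\chi,\sigma)\bigr)$ in $f_*(\hA\rtimes\Sigma)$ to the class of $(\chi,z,\sigma)$ in $\Sigmaw$, check it is well defined (i.e.\ carries $\theta(\hA*\A)$ into $H$ — this is immediate from the formula for $f$), and check it is a groupoid isomorphism with continuous inverse.

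The cleanest way to carry this out is actually to use the universal property rather than a bare-hands check. I would verify that $\Sigmaw$, equipped with the evident inclusion $i_*:\hA\times\T\hookrightarrow\Sigmaw$ (sending $(\chi,z)$ to the class of $(\chi,z,\hp(\chi))$) and the evident surjection $j:\Sigmaw\to\hA\rtimes\G$ (sending the class of $(\chi,z,\sigma)$ to $(\chi,\dot\sigma)$), is a compatible extension of $\hA\rtimes\G$ by $\hA\times\T$ — this is exactly the content of \cite{ikrsw:jfa21}*{Proposition~3.2}. Then I would exhibit the homomorphism $F:\hA\rtimes\Sigma\to\Sigmaw$ given by $F(\chi,\sigma) = [(\chi,1,\sigma)]$ and check that the square
\begin{equation*}
  \begin{tikzcd}[row sep = 1ex, column sep = 2cm]
    \hA*\A\arrow[r,"\iota_*"] \arrow[dd,"f",swap]& \hA\rtimes\Sigma \arrow[dd,"F"] \arrow[rd,"p_*"] \\
    && \hA\rtimes\G \\
    \hA\times\T\arrow[r,"i_*",swap] & \Sigmaw \arrow[ru,"j",swap]
  \end{tikzcd}
\end{equation*}
commutes; indeed $F(\iota_*(\chi,a)) = F(\chi,a) = [(\chi,1,a)] = [(\chi,\chi(a),\hp(\chi))] = i_*(\chi,\chi(a)) = i_*(f(\chi,a))$, using that $(\chi,1,a)$ and $(\chi,\chi(a),\hp(\chi))\cdot(\chi,\overline{\chi(a)},a)$ differ by the element $(\chi,\overline{\chi(a)},a)\in H$. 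By the uniqueness clause of Theorem~\ref{thm:functor}, there is then a proper isomorphism $g:f_*(\hA\rtimes\Sigma)\to\Sigmaw$ with $g\circ f_* = F$. This is the whole first assertion.

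For the \cs-algebra statements, once $\Sigmaw\cong f_*(\hA\rtimes\Sigma)$ as twists over $\hA\rtimes\G$, a proper isomorphism of twists induces an isomorphism of the associated Fell line bundles and hence of both $C_c(\hA\rtimes\G;-)$ as $^*$-algebras and of the Hilbert modules $\H(\hA\rtimes\G;-)$; thus $\cs(\hA\rtimes\G;\Sigmaw)\cong\cs(\hA\rtimes\G;f_*(\hA\rtimes\Sigma))$ and likewise for the reduced algebras. It then remains only to cite \cite{ikrsw:jfa21}*{Theorem~3.4}, which already gives $\cs(\Sigma)\cong\cs(\hA\rtimes\G;\Sigmaw)$ and $\cs_r(\Sigma)\cong\cs_r(\hA\rtimes\G;\Sigmaw)$, and compose the isomorphisms. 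I expect the main obstacle to be purely bookkeeping: getting the base-point identifications in the semidirect product $\SD(\hA\times\T,\hA\rtimes\G)$ and the normal-subgroupoid quotients to line up so that the formula for $g$ is manifestly well defined and continuous both ways. The topological points (openness, that the quotients are locally compact Hausdorff) are already supplied by Theorem~\ref{thm:functor} and \cite{ikrsw:jfa21}*{Proposition~3.2}, so no new analysis is needed there.
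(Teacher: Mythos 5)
Your proposal is correct and takes essentially the same route as the paper: the paper also defines the map $(\chi,\sigma)\mapsto[\chi,1,\sigma]$ from $\hA\rtimes\Sigma$ to $\Sigmaw$, checks that the corresponding square commutes, invokes the uniqueness clause of Theorem~\ref{thm:functor} to obtain the proper isomorphism, and then cites the earlier result of \cite{ikrsw:jfa21} for the full and reduced $\cs$-algebra statements.
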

\begin{proof}
  Theorem~\ref{thm:functor} implies that there is a unique (up to
  proper isomorphism) extension $f_*(\hA\rtimes \Sigma)$ of
  $\hA\rtimes \G$ by $\hA\times \T$ and a twist morphism that is
  compatible with $f$. In particular, $f_{*}(\hA\rtimes \Sigma)$ is a
  $\T$-groupoid. 
  We get a natural map $g:\hA\rtimes\Sigma$ to $\Sigmaw$ given by
  $g(\chi,\sigma)=[\chi,1,\sigma]$, and the diagram
  \begin{equation}
    \label{eq:82}
    \begin{tikzcd}[row sep = 1ex, column sep = 2cm]
      \hA*\A\arrow[r,"\iota_{*}"] \arrow[dd,"f",swap]& \hA\rtimes
      \Sigma
      \arrow[dr,"p_{*}"] \arrow[dd,"g",swap]\\
      &&\hA\rtimes\G \\
      \hA\times \T\arrow[r,"i",swap]&\Sigmaw \arrow[ur,"j",swap]
    \end{tikzcd}
  \end{equation}
  commutes.  The proper isomorphism of $\Sigmaw$ with
  $f_{*}(\hA\rtimes \Sigma)$ follows from the uniqueness guaranteed by
  Theorem~\ref{thm:functor} and the final assertion follows from
  \cite{ikrsw:jfa21}*{Theorem~3.3}.
\end{proof}

It follows immediately that if $\Sigma$ is properly isomorphic to the
semidirect product $\SD(\A,\G)$, then
$[\hA\rtimes\Sigma] = [\hA\rtimes(\SD(\A,\G))] =[\SD(\A,(\hA\rtimes\G))]$ and hence $[\Sigmaw]$ is
trivial. Thus $\cs(\Sigma) \cong \cs(\hA\rtimes\G)$.

\begin{remark} \label{rem-mackey} 
  As mentioned in the introduction, the twist $\Sigmah$ appearing in Theorem~\ref{thm:pushout} is responsible for the Mackey obstruction of the classical normal subgroup analysis of  \cite{mac:am58}. Indeed, let us apply the theorem when $\Sigma$ is a
  locally compact group and $\A$ is a closed normal abelian subgroup.  Then $\Sigma$ and $\G=\Sigma/\A$ act on
  $\A$ by conjugation and give right actions on the space of characters $\hA$.
  The corresponding twist $\Sigmah$ 
  is the quotient of the 
  groupoid $(\hA\rtimes \Sigma)\times\T$ where $(\chi,a\sigma,\theta)$ is
  identified with $(\chi,\sigma,\theta\chi(a))$ for all $a\in \A$.  We
  let $[\chi,\sigma,\theta]$ be the class of $(\chi,\sigma,\theta)$ in
  $\Sigmah$.   If $\chi\in\hA$, then let $\Sigma(\chi)$ and $\G(\chi)$ be
  the stabilizers at $\chi$ for the actions on $\hA$, and let
  $\Sigmah(\chi)$ be the isotropy group of $\Sigmah$ at $\chi$.  We observe that
  $\Sigmah(\chi)$, up to an obvious identification, is the pushout of the group extension
  \begin{equation}
    \label{eq:26}
    \begin{tikzcd}
      \A\arrow[r]&\Sigma(\chi)\arrow[r]&\G(\chi)
    \end{tikzcd}
  \end{equation}
by the homomorphism $\chi:\A\to\T$. Indeed, this pushout $\chi_*(\Sigma(\chi))$ is the quotient of
$\Sigma(\chi)\times \T$ by the equivalence relation identifying
$(a\sigma,\theta)$ with $(\sigma,\theta\chi(a))$ for all $a\in \A$. Thus we just identify $[\chi,\sigma,\theta]\in\Sigmah(\chi)$ with $[\sigma,\theta]\in\chi_*(\Sigma(\chi))$. The class of $\Sigmah(\chi)$ in $H^{2}(\G(\chi),\T)$ is the classical Mackey obstruction. More precisely, let $L$ be an irreducible unitary representation of $\Sigma$. According to Theorem~\ref{thm:pushout}, we may view it as a representation of the twisted groupoid $(\hA\rtimes \G,\Sigmah)$. Its restriction to $\hA$ defines a measure class which is invariant and ergodic under the action of $\G$. If this measure class is transitive, which will be always the case if  $\A$ is regularly embedded, then we have a representation of a twisted transitive measured groupoid $(O\rtimes \G,\Sigmah\restr{O})$, where $O\subset\hA$ is an orbit of the action and $\Sigmah\restr{O}$ is the reduction of $\Sigmah$ to $O$. We pick $\chi\in O$. Since the $(\Sigmaw(\chi),\Sigmah\restr{O})$-groupoid equivalence $\Sigmah_{O}^{\chi}$ is compatible with the twists in the sense of \cite[D\'efinition 5.3]{ren:jot87},
it implements a bijective correspondence between the unitary representations of $(O\rtimes \G,\Sigmah\restr{O})$ and those of $(\G(\chi),\Sigmah(\chi))$. Therefore $L$ is given by an irreducible unitary representation of the twisted group $(\G(\chi),\Sigmah(\chi))$. 
\end{remark}

\begin{example}\label{ex:abelian group} 
  Let $H$ be a locally compact abelian group and let $A \subset H$ be
  a closed subgroup.  Then applying the above theorem with
  $\Sigma = H$ and $\A = A$, we conclude that $\Sigmaw$ 
 is a bundle of abelian groups over $\Sigmaw^{(0)} \cong \hat{A}$ where each fiber is
  an extension of $H/A$ by $\T$.  Each of these extensions is
  abelian because $H$ is abelian (and the action of $H$ on $\hat{A}$ is trivial).  
  Hence, each extension is determined by a symmetric $\T$-valued Borel $2$-cocycle 
  and any such $2$-cocycle is necessarily trivial by \cite{kle:ma65}*{Lemma~7.2}.
  But the twist is not trivial in general: for example, if $H=\R$ and
  $A=\Z \le \R$, then triviality of the twist would imply $\cs(\R)\cong C_{0}(\T\times\Z)$, 
  which is nonsense.
\end{example}

  \begin{example}[Generalized Twists] \label{ex-gen-twist}
    We now consider the case where $A$ is a locally compact abelian group,
    $\A = \goo \times A$, and $\G$ acts on $\A$ by
translation on the first factor.   Since this simply gives us a twist
when $A=\T$, we will say that $\Sigma$ is a \emph{generalized twist}
in this case.  Note that even for twists,
$\Sigma$ need not be a trivial extension.  Generalized twists were
studied in \cite{iksw:jot19}.

View $\hA := \hat{A} \times \goo$ as a locally compact space. (We put the 
factor of $\goo$ on the right, as a reminder that we are thinking of
$\hat{A}$ as a space rather than as a group, and to line up with the 
natural identification of $\hA * \A$ with $\hat{A} \times \goo \times A$, 
which we make without further comment). Then 
$\G$ acts on the second factor of $\hA$.
This means we can replace $\hA\rtimes\G$ and $\hA\rtimes \Sigma$ with
the products $\hat{A}\times \G$ and $\hat{A} \times \Sigma$,
respectively. Under these identifications, Equation~\eqref{eq:canmap} becomes
$f(\chi, u, a) = (\chi, u, {\chi(a)})$.  
Moreover we may
assume that the Haar system $\beta$  on $\A = \goo \times A$ 
is constant in the sense that there is a fixed Haar measure $\mu$ on $A$
such $\beta^u = \mu$ for all $u \in \goo$.
\end{example}

If $\chi \in \hat A$, then we get a $\G$-equivariant map
$\fchi : \goo \times A \to \goo \times \T$ given by
$\fchi(u, a)=(u, {\chi(a)})$. Thus we can form the pushout
$\fchiss$ so that
\begin{equation}
  \label{eq:4}
  \begin{tikzcd}[column sep= 3cm, row sep=2ex]
    \goo\times A \arrow[r,"\iota"] \arrow[dd,"\fchi"]
    &\Sigma \arrow[rd,"p"] \arrow[dd,"\fchi_{*}"] \\
    &&\G \\
    \goo\times\T \arrow[r,"\iota'"] & \fchiss
    \arrow[ru,"p'"]
  \end{tikzcd}
\end{equation}
commutes.  Then $\cs(\G;\fchiss)$ is the
completion of $C_{c}^{\chi}(\Sigma)$ consisting of functions
$g\in C_{c}(\Sigma)$ such that
$g(\iota(r(\sigma),a)\sigma) =\chi(a) g(\sigma)$ with the $*$-algebra
structure discussed at the beginning of this section.

\begin{prop} \label{prop:bundle} Let $\Sigma$ be a generalized twist
  as in Example~\ref{ex-gen-twist}. For $\chi \in \hat A$, let
  $\fchi:\goo \times A \to \goo \times \T$ and $\fchiss$ be the $\G$-equivariant
  map and $\T$-groupoid defined above. Then with notation as above,
  \begin{equation}
    \label{eq:1}  
    \cs(\Sigma) \cong \cs(\hat{A} \times \G; f_{*}(\hat{A} \times
    \Sigma)) 
  \end{equation}
  and $\cs(\Sigma)$ is the section algebra of an upper-semicontinuous
  $C^*$-bundle over $\hat{A}$ with fiber at $\chi \in \hat{A}$
  isomorphic to $\cs(\G; \fchiss)$.
\end{prop}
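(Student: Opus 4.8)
The plan is to extract the first isomorphism directly from Theorem~\ref{thm:pushout}, and then to obtain the bundle decomposition by exhibiting $\cs(\hat A\times\G; f_*(\hat A\times\Sigma))$ as a $C_0(\hat A)$-algebra and computing its fibres. First I would apply Theorem~\ref{thm:pushout} to $\Sigma$ together with the identifications of Example~\ref{ex-gen-twist}, under which $\hA\rtimes\G = \hat A\times\G$, $\hA\rtimes\Sigma = \hat A\times\Sigma$, and the canonical map \eqref{eq:canmap} becomes $f(\chi,u,a) = (\chi,u,\chi(a))$. This gives $\cs(\Sigma)\cong\cs(\hat A\times\G; f_*(\hat A\times\Sigma))$ at once, so all remaining work is on the right-hand side.

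Since $\G$ acts on the unit space $\hat A\times\goo$ of $\hat A\times\G$ only through the $\goo$-coordinate, the first-factor projection $q\colon\hat A\times\goo\to\hat A$ is constant on orbits; pulling $C_0(\hat A)$ back along $q$ and then along the range map then embeds it in the centre of $M\bigl(\cs(\hat A\times\G; f_*(\hat A\times\Sigma))\bigr)$, acting on $\xi\in C_c(\hat A\times\G; f_*(\hat A\times\Sigma))$ by $(\phi\cdot\xi)(\chi,\gamma)=\phi(\chi)\xi(\chi,\gamma)$; and since $\phi\cdot\xi=\xi$ whenever $\phi\in C_c(\hat A)$ is identically $1$ on the (compact) image in $\hat A$ of $\supp\xi$, the product $C_0(\hat A)\cdot\cs(\hat A\times\G; f_*(\hat A\times\Sigma))$ is dense. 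Hence $\cs(\hat A\times\G; f_*(\hat A\times\Sigma))$ is a $C_0(\hat A)$-algebra, and by the standard correspondence between $C_0(X)$-algebras and upper-semicontinuous $C^*$-bundles (see, e.g., \cite{wil:crossed}*{Appendix~C}) it is the section algebra of an upper-semicontinuous $C^*$-bundle $\mathscr A$ over $\hat A$ whose fibre at $\chi$ is the quotient $\mathscr A_\chi := \cs(\hat A\times\G; f_*(\hat A\times\Sigma))/I_\chi$, where $I_\chi$ is the closed ideal generated by $C_0(\hat A\setminus\{\chi\})$.

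It then remains to identify $\mathscr A_\chi$ with $\cs(\G;\fchiss)$, and the key observation is that restricting the twist $f_*(\hat A\times\Sigma)$ to the slice $\{\chi\}\times\goo$ of its unit space returns $\fchiss$. Indeed, unwinding the construction of Theorem~\ref{thm:functor} for the map $f$, the fibred product $\D := \bigl(\SD(\hat A\times\T,\hat A\times\G)\bigr)*_{\hat A\times\G}(\hat A\times\Sigma)$ does not involve $f$ and, as a topological groupoid over $\hat A\times\goo$, is simply $\hat A\times\D_0$, where $\D_0 := \bigl(\SD(\goo\times\T,\G)\bigr)*_\G\Sigma$ is the fibred product used to build $\fchiss$; under this identification the image $\theta(\hA*\A)$ by which one quotients corresponds to the union over $\chi\in\hat A$ of the slices $\{\chi\}\times\theta_\chi(\goo\times A)$, with $\theta_\chi$ the analogous embedding for $\fchiss$, so restricting $f_*(\hat A\times\Sigma)=\D/\theta(\hA*\A)$ to the slice over $\chi$ yields $\D_0/\theta_\chi(\goo\times A)=\fchiss$. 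Consequently restriction of sections is a surjective $*$-homomorphism $\rho_\chi\colon C_c(\hat A\times\G; f_*(\hat A\times\Sigma))\to C_c(\G;\fchiss)$ with kernel $I_\chi\cap C_c(\hat A\times\G; f_*(\hat A\times\Sigma))$; since $\pi\circ\rho_\chi$ is a $*$-representation of the former whenever $\pi$ is one of the latter, $\rho_\chi$ is bounded for the full norms, extends to a surjection of full $C^*$-algebras killing $I_\chi$, and so descends to a surjection $\mathscr A_\chi\to\cs(\G;\fchiss)$. Injectivity I would get by lifting every $*$-representation of $C_c(\G;\fchiss)$ through $\rho_\chi$ --- letting $C_0(\hat A)$ act by evaluation at $\chi$ and invoking disintegration of groupoid representations --- to a $*$-representation of $C_c(\hat A\times\G; f_*(\hat A\times\Sigma))$ that annihilates $I_\chi$, so that the full norm on $C_c(\G;\fchiss)$ matches the quotient norm on $\mathscr A_\chi$.

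The hard part will be exactly this last lifting/norm-matching step: checking that the full norm on $C_c(\G;\fchiss)$ is computed by the $C_0(\hat A)$-fibre $\mathscr A_\chi$. This is the expected behaviour of the $C^*$-algebra of a ``bundle of twisted groupoids'' over its base, and the constancy of the field $\hat A\times\G$ in the $\hat A$-direction should keep it tractable, but it still relies on the disintegration machinery for groupoid representations rather than on a purely formal argument. Everything else --- the first isomorphism, the $C_0(\hat A)$-algebra structure, and the elementary identity $f_*(\hat A\times\Sigma)\restr{\{\chi\}\times\goo}\cong\fchiss$ --- is either immediate from Theorem~\ref{thm:pushout} or routine $C_0(X)$-algebra bookkeeping.
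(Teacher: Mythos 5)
Your proposal is correct and follows the same overall route as the paper: the first isomorphism is quoted from Theorem~\ref{thm:pushout}, the algebra $\cs(\hat{A}\times\G; f_{*}(\hat{A}\times\Sigma))$ is exhibited as a $C_{0}(\hat{A})$-algebra via the first-factor projection, and the fibres are identified with $\cs(\G;\fchiss)$ by restricting the twist to the slices $\{\chi\}\times\goo$ (your unwinding of the pushout to see that $f_{*}(\hat{A}\times\Sigma)\restr{\{\chi\}\times\goo}\cong\fchiss$ is exactly the identification the paper asserts without proof). The only substantive difference is in how the fibre computation is executed: the paper isolates it as Proposition~\ref{prop-jean2} in Appendix~\ref{sec:bundles-twists}, where the twist is recast as a Fell bundle and the exactness of $0\to\cs(\G\restr{U(t)},\B)\to\cs(\G,\B)\to\cs(\G(t),\B)\to 0$ is imported from \cite{ionwil:hjm11}*{Theorem~3.7}, whereas you propose to prove the norm-matching step directly by lifting representations through the restriction map using disintegration. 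What you flag as ``the hard part'' is precisely the content of that cited exact-sequence result (whose proof is itself a disintegration argument), so your plan is sound but would amount to reproving it; citing it, as the paper does, closes the one remaining gap in your argument and makes the rest routine.
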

\begin{proof}
  The isomorphism in \eqref{eq:1} comes from
  Theorem~\ref{thm:pushout}.

  The map $p: \hat{A} \times \goo \to \goo$ is continuous and satisfies
  $p\circ s=p\circ r$ so that $f_{*}(\hat A\times\Sigma)$ is a
  groupoid bundle over $\hat{A}$ as in
  Appendix~\ref{sec:bundles-twists}.   Hence we can invoke
  Proposition~\ref{prop-jean2} to see that $\cs(\hat{A} \times \G;
  f_{*}(\hat{A} \times \Sigma))$ is isomorphic to the section algebra
  of an upper-semicontinuous \cs-bundle over $\hat A$.    Since we can identify
  $f_{*}(\hat A\times\Sigma)(\chi)$ with $\fchiss$ and
  $(\hat A\times\G)(\chi)$ with $\G$, the result follows. 
\end{proof}

\begin{prop}\label{prop:compact-case}
 With notation as in Example~\ref{ex-gen-twist}, suppose that   $A$ compact.
Then the dual $\hat{A}$ is discrete and
  \begin{equation}
    \label{eq:28}
    \cs(\Sigma) \cong \bigoplus_{\chi \in \hat{A}} \cs(\G; \fchiss) 
    \quad\text{and}\quad
    \cs_r(\Sigma) \cong \bigoplus_{\chi \in \hat{A}} \cs_r(\G; \fchiss).
  \end{equation}
\end{prop}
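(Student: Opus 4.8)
The plan is to treat the two isomorphisms in~\eqref{eq:28} separately: the full case is essentially a restatement of Proposition~\ref{prop:bundle}, while the reduced case requires a genuine decomposition argument built on Lemma~\ref{lem:inv-clopen}. The only place compactness is used directly is at the very start: since $A$ is compact (and second countable), Pontryagin duality gives that $\hat A$ is a countable discrete group. For the first isomorphism, Proposition~\ref{prop:bundle} already realises $\cs(\Sigma)$ as the $C_0$-section algebra of an upper-semicontinuous $C^*$-bundle over $\hat A$ whose fibre at $\chi$ is $\cs(\G;\fchiss)$; over a discrete base such a section algebra is precisely the $c_0$-direct sum of the fibres, so $\cs(\Sigma)\cong\bigoplus_{\chi\in\hat A}\cs(\G;\fchiss)$ follows at once.

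For the reduced case I would start from Theorem~\ref{thm:pushout}, which (using the generalised-twist identifications of Example~\ref{ex-gen-twist}, under which $\Sigmaw\cong f_{*}(\hat A\times\Sigma)$) gives $\cs_r(\Sigma)\cong\cs_r(\hat A\times\G;\Sigmaw)$, where $\hat A\times\G$ has unit space $\hat A\times\goo$. For each $\chi\in\hat A$, the set $F_\chi:=\{\chi\}\times\goo$ is clopen in $\hat A\times\goo$ because $\hat A$ is discrete, and it is $\hat A\times\G$-invariant because $\G$ acts trivially on the $\hat A$-coordinate; moreover the reduction $\Sigmaw\restr{F_\chi}$ over $(\hat A\times\G)\restr{F_\chi}\cong\G$ is the $\T$-groupoid $\fchiss$, which is the same identification of fibres used in the proof of Proposition~\ref{prop:bundle}. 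Lemma~\ref{lem:inv-clopen} then supplies, for each $\chi$, a central multiplier projection $p_\chi$ of $\cs_r(\hat A\times\G;\Sigmaw)$ with $p_\chi\,\cs_r(\hat A\times\G;\Sigmaw)\cong\cs_r(\G;\fchiss)$, and the $p_\chi$ are mutually orthogonal (being multiplication operators by the disjointly supported functions $\mathbf 1_{F_\chi}$).

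It remains to glue these pieces into a $c_0$-direct sum, and this is the step that deserves the most care. Since $\hat A$ is discrete, the image in $\hat A$ of the compact support of any $f\in C_c(\hat A\times\G;\Sigmaw)$ is finite, so $f=\sum_\chi f\restr{F_\chi}$ is a finite sum with each summand in the orthogonal ideal $p_\chi C_c(\hat A\times\G;\Sigmaw)\cong C_c(\G;\fchiss)$; thus $C_c(\hat A\times\G;\Sigmaw)$ is the algebraic direct sum of these ideals. The Hilbert-module decomposition $\H(\hat A\times\G;\Sigmaw)\cong\bigoplus_\chi\H(\G;\fchiss)$ underlying Lemma~\ref{lem:inv-clopen} (applied simultaneously over all the $F_\chi$) shows that left multiplication by $f$ acts componentwise, so $\|f\|_r=\max_\chi\|f\restr{F_\chi}\|_r$; completing in the reduced norm therefore yields exactly $\cs_r(\hat A\times\G;\Sigmaw)\cong\bigoplus_{\chi\in\hat A}\cs_r(\G;\fchiss)$, and combining with Theorem~\ref{thm:pushout} gives the second isomorphism of~\eqref{eq:28}. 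The main obstacle is precisely this last point: verifying that the reduced norm is the supremum over the summands, rather than producing some larger completion, which is exactly what the simultaneous clopen decomposition of the Hilbert module delivers.
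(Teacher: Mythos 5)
Your proof is correct and follows essentially the same route as the paper: discreteness of $\hat A$, the decomposition of $\cs(\Sigma)$ over $\hat{A}$ coming from Proposition~\ref{prop:bundle} for the full algebra, and Lemma~\ref{lem:inv-clopen} for identifying the reduced corners with $\cs_r(\G;\fchiss)$. The only real difference is cosmetic: the paper produces the mutually orthogonal central projections in $M(\cs_r(\Sigma))$ as the images $\pi(q_\chi)$ of the full-algebra projections under the quotient map $\pi$, whereas you construct them directly from the clopen invariant sets $F_\chi$ and justify the $c_0$-direct sum by an explicit componentwise Hilbert-module norm computation --- the two arguments yield the same projections and the same conclusion.
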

 
\begin{proof}
 To prove the first isomorphism, note that by Proposition~\ref{prop-jean2}
\[
\cs(\Sigma) \cong \cs(\hat{A} \times \G; f_{*}(\hat{A} \times \Sigma))
 \]
is a $C_0(\hat{A})$-algebra.
That is, letting $ZM(\cs(\Sigma))$ denote the center of $M(\cs(\Sigma))$, 
there is a $\sigma$-unital *-homomorphism $\rho: C_0(\hat{A}) \to ZM(\cs(\Sigma))$.
Since $\hat{A}$ is discrete, 
the images of the characteristic functions of singleton sets under $\rho$ give rise to
a family $\{ q_\chi \}_{\chi \in \hA}$ of mutually orthogonal central projections in 
  $M(\cs(\Sigma))$ which sum to unity in the strict topology.  
  Moreover, the summands coincide with the fibers of the  upper-semicontinuous
  $C^*$-bundle over $\hat{A}$ given in Proposition~\ref{prop:bundle}  and hence
  \[
  q_\chi \cs(\Sigma)q_\chi  =  q_\chi \cs(\Sigma) \cong \cs(\G; \fchiss).
  \]
  for all $\chi \in \hat{A}$.
  
  For the second isomorphism, let $\pi: \cs(\Sigma) \to \cs_r(\Sigma)$
  be the canonical quotient map. An argument like that of the preceding paragraph
  using the family $\{ \pi(q_\chi) \}_{\chi \in \hA}$ of mutually orthogonal central
  projections in  $M(\cs_r(\Sigma))$ gives $C^*_r(\Sigma) \cong \bigoplus_{\chi \in \hA}
  \pi(q_\chi) C^*_r(\Sigma)$. Lemma~\ref{lem:inv-clopen} gives 
  $\pi(q_\chi) \cs_r(\Sigma) \cong \cs_r(\G; \fchiss)$, and the result follows.
\end{proof}
 
\begin{remark}\label{rmk:t-case}
  If $A=\T$ and $\Sigma$ is a twist, then $\hat{A} = \Z$, and we have
  $[f^n_{*}(\Sigma)] = n[\Sigma]$ for $n \in \Z$.
  It follows that the central summand corresponding to $n = 1$
  is isomorphic to $\cs(\G; \Sigma)$ and thus there is central projection
  $q = q_1 \in M(\cs(\Sigma))$ such that
  \[
    \cs(\G; \Sigma) \cong q\cs(\Sigma)    \quad\text{and}\quad
    \cs_r(G; \Sigma) \cong q\cs_r(\Sigma)
  \]
\end{remark}

Now suppose that $\G=\goo$ so that $\Sigma = \A$ is itself an abelian
group bundle 
regarded as a groupoid with unit space $\goo $ and let $\Twist$ be a
$\T$-twist over $\A$. 
Then since $\A$ is amenable $\cs(\A; \Twist) = \cs_r(\A; \Twist)$ 
(see, for example \cite{simwil:ijm13}*{Thm 1}).
We shall say that such a twist is \emph{abelian} if $\Twist$ is also an abelian
group bundle---that is if $\Twist(u)$ is abelian for each $u \in \goo$.
Then $\Twist$ is abelian if and only if $\cs(\Twist)$ is abelian and
in that case $\cs(\Twist) \cong C_0(\hat\Twist)$.  Arguing as in
Example~\ref{ex:abelian group}, we see that such extensions must be pointwise 
trivial but need not be globally trivial.  If $\Twist$ is determined by a
continuous $\T$-valued 2-cocycle $c$, then $\Twist$ is abelian if and
only if $c$ is symmetric (cf., \cite[Lemma~3.5]{dgnrw:jfa20}).
Suppose now that $\Twist$ is abelian. For $n \in \Z$, let 
$V_n := \{ \chi \in \hat\Twist : \chi(z,u) = z^n\,\text{ for all }z\in
\T\text{ and }u\in\goo \}$. Then $\cs(\Twist) \cong C_0(\hat\Twist)$ 
decomposes as a direct sum with summands of the form $C_0(V_n)$.  
Note that each $V_n$ is clopen.
The projection $q$ in Remark~\ref{rmk:t-case} may then be
identified with the characteristic function of $U_\Twist := V_{1}$
and hence 
\[
  \cs(\A; \Twist) \cong q\cs(\Twist) \cong
  C_0(U_\Twist).  
\]
See \cite[Section 3]{dgn:xx20} for a related construction.

In the case that $\Twist \cong \T \times \A$ and thus
$\hat\Twist \cong \Z \times \hat{\A}$, we have
$U_\Twist \cong \{ 1 \} \times \hat{\A} \cong \hat{\A}$.

We return now to the more general situation where $\Sigma$ is a unit
space fixing extension of $\G$ by the group bundle $\A$ as in the
diagram~(\ref{eq:formal-ext-intro}) from the introduction.  Suppose
that, in addition, $\Omega$ is a $\T$-groupoid extension of $\Sigma$
\begin{equation}
  \label{eq:omega-ext}\tag{\star}
  \begin{tikzcd}[column sep=3cm]
    \goo \times \T \arrow[r,"\tilde\iota"] \arrow[dr,shift left, bend
    right = 15] \arrow[dr,shift right, bend right = 15]&\Omega
    \arrow[r,"\tilde{p}", two heads] \arrow[d,shift left]
    \arrow[d,shift right]&\Sigma \arrow[dl,shift left, bend left = 15]
    \arrow[dl,shift right, bend left = 15]
    \\
    &\goo&
  \end{tikzcd}
\end{equation}
such that $\Twist_\Omega := \tilde{p}^{-1}(\A)$,  
its restriction to $\A$, is an abelian group bundle over $\goo$.  We may
thus regard $\Omega$ as an extension of $\G$ by $\Twist_\Omega$.  We
assume that $\A$, $\Sigma$ and $\G$ are endowed with Haar systems that
satisfy \eqref{eq:harhar}, the Haar system in $\goo \times \T$ is given
by the Haar measure on $\T$, and the Haar system on $\Omega$ is the
one naturally defined by the Haar systems on $\goo \times \T$ and
$\Sigma$. To declutter notation a little, we write $\hTwistOmega$ for the dual 
bundle $(\Twist_\Omega)^\wedge$.

\begin{cor}\label{cor:ext-abelian-bundle}
  With notation as above let
  $f : \hTwistOmega* \Twist_\Omega \to
  \hTwistOmega \times \T$ be given by
  $f(\chi, a)=( \chi,{\chi(a)})$.  
  Then
  \begin{align*}
    \cs(\Omega)&\cong \cs( \hTwistOmega\rtimes\G; f_{*}(
                 \hTwistOmega \rtimes \Omega))
                 \quad\text{and}  \\ 
    \cs_r(\Omega) &\cong  \cs_r( \hTwistOmega\rtimes\G;
                    f_{*}( \hTwistOmega \rtimes
                    \Omega)). 
  \end{align*}
\end{cor}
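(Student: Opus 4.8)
The plan is to recognise the corollary as a direct instance of Theorem~\ref{thm:pushout}, applied to $\Omega$ regarded as a compatible extension of $\G$ by the abelian group bundle $\Twist_\Omega$ rather than as a twist over $\Sigma$. Since $\Twist_\Omega = \tilde p^{-1}(\A)$ is a normal subgroupoid of $\Omega$ and, by hypothesis, an abelian group bundle over $\goo$, conjugation in $\Omega$ gives an action of $\Omega$ on $\Twist_\Omega$; because $\Twist_\Omega$ is abelian and normal this action descends to an action of $\G \cong \Sigma/\iota(\A)$. The composite $p\circ\tilde p : \Omega \to \G$ then exhibits $\Omega$ as a compatible extension of $\G$ with $\ker(p\circ\tilde p) = \tilde p^{-1}(\iota(\A)) = \Twist_\Omega$, and the map $f$ of the corollary is precisely the canonical map \eqref{eq:canmap} for this extension, with $\A$ replaced by $\Twist_\Omega$. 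So it suffices to verify that the pair $(\Twist_\Omega, \Omega)$ meets all the standing hypotheses of this section, after which both displayed isomorphisms are immediate from Theorem~\ref{thm:pushout}.

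The topological hypotheses are straightforward: the inclusion $\Twist_\Omega \hookrightarrow \Omega$ is a homeomorphism onto the closed subgroupoid $\ker(p\circ\tilde p)$, and $p\circ\tilde p$ is a continuous open surjection restricting to a homeomorphism of unit spaces since both $\tilde p$ and $p$ are. As an abelian group bundle over $\goo$ with open projection, $\Twist_\Omega$ admits a Haar system, and I would take the one $\beta_\Omega$ naturally defined by the Haar measure on $\T$ and the Haar system $\beta$ on $\A$ via the extension $\goo\times\T \to \Twist_\Omega \to \A$ (note $\tilde\iota(\goo\times\T)\subseteq\Twist_\Omega$ since $\goo\subseteq\iota(\A)$).

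The only point requiring real work is that the triple $(\Twist_\Omega, \Omega, \G)$ satisfies the Haar-system compatibility \eqref{eq:harhar} with the Haar system $\alpha$ on $\G$ already fixed. This is a Fubini computation chaining together three instances of \eqref{eq:harhar}: the relation for $(\goo\times\T, \Omega, \Sigma)$ coming from the chosen Haar system on $\Omega$; the hypothesis \eqref{eq:harhar} for $(\A, \Sigma, \G)$; and the defining relation of $\beta_\Omega$ in terms of the Haar measure on $\T$ and $\beta$. Concretely, for $g\in C_c(\Omega)$ and $u\in\goo$ one writes $\int_\Omega g\,d\lambda_\Omega^u$ as an iterated integral over $\Sigma$ and then $\T$; the $\T$-averaged integrand descends to a function on $\Sigma$ because $\tilde\iota(\goo\times\T)$ is central in $\Omega$; applying \eqref{eq:harhar} for $(\A,\Sigma,\G)$ to it and collapsing the resulting inner double integral over $\A$ and $\T$ into integration against $\beta_\Omega^{s(\omega)}$ yields exactly \eqref{eq:harhar} for $(\Twist_\Omega,\Omega,\G)$. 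I expect this Haar-system bookkeeping — in particular arranging the measurable lifts $\G\to\Sigma\to\Omega$ compatibly and checking that the integrands are independent of those choices — to be the main (if routine) obstacle.

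With these verifications in hand, Theorem~\ref{thm:pushout} applied to the extension $\Twist_\Omega \to \Omega \to \G$ and the canonical map $f:\hTwistOmega*\Twist_\Omega\to\hTwistOmega\times\T$ gives $\cs(\Omega)\cong\cs(\hTwistOmega\rtimes\G; f_*(\hTwistOmega\rtimes\Omega))$ together with the corresponding isomorphism of reduced algebras, completing the proof.
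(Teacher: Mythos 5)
Your proposal is correct and follows essentially the same route as the paper: the paper's proof is a one-line appeal to Theorem~\ref{thm:pushout} with $\Twist_\Omega$ in place of $\A$, relying on the discussion preceding the corollary (which sets up $\Omega$ as a compatible extension of $\G$ by $\Twist_\Omega$ and fixes the compatible Haar systems) for exactly the verifications you spell out. The extra detail you supply on the descent of the conjugation action and the Haar-system bookkeeping via \eqref{eq:harhar} is just an explicit rendering of what the paper folds into its standing assumptions.
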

\begin{proof}
  This follows immediately from Remark~\ref{rmk:t-case}, the above discussion, and
  Theorem~\ref{thm:pushout} with $\Twist_\Omega$ in place of $\A$.
\end{proof}
By arguing as in Remark~\ref{rmk:t-case} and
Corollary~\ref{cor:ext-abelian-bundle} we may conclude that
$\cs(\Sigma; \Omega)$ is isomorphic to the corner associated to the
central projection $q_\Omega$ in
\[
  M(\cs( \hTwistOmega\rtimes\G; f_{*}(
  \hTwistOmega \rtimes \Omega)))
\]
corresponding to the characteristic function of
\[
  U_\Omega := U_{\Twist_\Omega} \subset \hTwistOmega =
  ( \hTwistOmega\rtimes\G)^{(0)}.
\]
Observe that $U_\Omega$ is an invariant clopen set under the action of
both $\G$ and $\Omega$ and thus both groupoids act on $U_\Omega$.

\begin{cor} \label{cor:twisted-extensions}
  With notation as above define
  $g: U_\Omega*\Twist_\Omega \to U_\Omega \times \T$ by
  $g(\chi, a)=( \chi,{\chi(a)})$.   Then
  \[
    \cs(\Sigma; \Omega) \cong \cs(U_\Omega\rtimes\G;
    g_{*}(U_\Omega\rtimes \Omega)) \quad\text{and} \quad \cs_r(\Sigma;
    \Omega) \cong \cs_r(U_\Omega\rtimes\G; g_{*}(U_\Omega\rtimes
    \Omega)).
  \]
\end{cor}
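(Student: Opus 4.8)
The plan is to deduce this corollary by combining Corollary~\ref{cor:ext-abelian-bundle} with Lemma~\ref{lem:inv-clopen} applied to the $\T$-groupoid $f_*(\hTwistOmega \rtimes \Omega)$ over $\hTwistOmega \rtimes \G$, exactly as in the passage immediately preceding the statement. First I would recall that, by the discussion after Corollary~\ref{cor:ext-abelian-bundle}, $\cs(\Sigma; \Omega)$ is isomorphic to the corner $q_\Omega \cs(\hTwistOmega \rtimes \G; f_*(\hTwistOmega \rtimes \Omega)) q_\Omega$, where $q_\Omega$ is the central multiplier projection corresponding to the characteristic function of the clopen $\G$- and $\Omega$-invariant set $U_\Omega \subset \hTwistOmega = (\hTwistOmega \rtimes \G)^{(0)}$; and similarly at the reduced level $\cs_r(\Sigma; \Omega) \cong \pi(q_\Omega) \cs_r(\hTwistOmega \rtimes \G; f_*(\hTwistOmega \rtimes \Omega)) \pi(q_\Omega)$, where $\pi$ is the canonical quotient. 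So it suffices to identify these corners with the (reduced) $C^*$-algebra of the restricted twist over the reduction of $\hTwistOmega \rtimes \G$ to $U_\Omega$.

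Next I would invoke Lemma~\ref{lem:inv-clopen} with $\G$ replaced by $\hTwistOmega \rtimes \G$ and $\Sigma$ replaced by the twist $f_*(\hTwistOmega \rtimes \Omega)$, and with $F = U_\Omega$: since $U_\Omega$ is a $(\hTwistOmega \rtimes \G)$-invariant clopen subset of the unit space, the lemma gives that $U_\Omega$ is also invariant for $f_*(\hTwistOmega \rtimes \Omega)$, that the restriction $f_*(\hTwistOmega \rtimes \Omega)\restr{U_\Omega}$ is a twist over $(\hTwistOmega \rtimes \G)\restr{U_\Omega}$, and that the associated central projection $p_{U_\Omega}$ satisfies $p_{U_\Omega} \cs_r(\hTwistOmega \rtimes \G; f_*(\hTwistOmega \rtimes \Omega)) \cong \cs_r((\hTwistOmega \rtimes \G)\restr{U_\Omega}; f_*(\hTwistOmega \rtimes \Omega)\restr{U_\Omega})$. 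An entirely analogous statement holds for the full algebras, since restriction to an invariant clopen set induces a surjection of $C_c$-algebras whose kernel is the ideal supported on the complement. The remaining task is purely to match up the data: I claim $p_{U_\Omega}$ corresponds to $q_\Omega$ under the isomorphism of Corollary~\ref{cor:ext-abelian-bundle}, and that $(\hTwistOmega \rtimes \G)\restr{U_\Omega} \cong U_\Omega \rtimes \G$, $(\hTwistOmega \rtimes \Omega)\restr{U_\Omega} \cong U_\Omega \rtimes \Omega$, with $f\restr{U_\Omega * \Twist_\Omega}$ coinciding with the map $g$ of the statement; hence $f_*(\hTwistOmega \rtimes \Omega)\restr{U_\Omega}$ is properly isomorphic to $g_*(U_\Omega \rtimes \Omega)$ by the uniqueness clause of Theorem~\ref{thm:functor}. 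Stringing these identifications together yields both displayed isomorphisms.

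The main obstacle I anticipate is the compatibility of the pushout construction with restriction to an invariant clopen set: one must check that forming $f_*(-)$ and then restricting to $U_\Omega$ gives the same extension (up to proper isomorphism) as first restricting to obtain $U_\Omega \rtimes \Omega$ and the map $g$, and then pushing out. This is where Theorem~\ref{thm:functor} does the real work — rather than computing the two extensions explicitly, I would exhibit a homomorphism from $g_*(U_\Omega \rtimes \Omega)$ into $f_*(\hTwistOmega \rtimes \Omega)\restr{U_\Omega}$ covering the identity on $U_\Omega \rtimes \G$ and on $U_\Omega \times \T$, whence the universal property forces it to be a proper isomorphism. Everything else is bookkeeping: the transformation groupoid of an action restricted to an invariant clopen subset of the unit space is visibly the transformation groupoid of the restricted action, and the natural pairing map $f$ restricts to $g$ by its very definition $f(\chi,a) = (\chi, \chi(a))$.
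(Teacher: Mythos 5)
Your proposal is correct and follows essentially the same route as the paper: identify $\cs(\Sigma;\Omega)$ with the corner $q_\Omega\cs(\hTwistOmega\rtimes\G; f_*(\hTwistOmega\rtimes\Omega))q_\Omega$, use the clopen-invariant-set restriction (Lemma~\ref{lem:inv-clopen} and its full-algebra analogue) to realise that corner as the algebra of the reduction to $U_\Omega$, and observe that restricting the transformation groupoids and the map $f$ to $U_\Omega$ reproduces $U_\Omega\rtimes\G$, $U_\Omega\rtimes\Omega$ and $g$, so that the restricted pushout is the pushout of the restriction. Your explicit appeal to the uniqueness clause of Theorem~\ref{thm:functor} for that last compatibility is a slightly more careful justification than the paper's bare assertion, but it is the same argument.
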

\begin{proof}
  Observe that
  \[
    \big(\hTwistOmega\rtimes\G\big)_{U_\Omega} \cong
    U_\Omega\rtimes\G \quad \text{and} \quad
    \big(\hTwistOmega\rtimes\Omega\big)_{U_\Omega}
    \cong U_\Omega\rtimes\Omega.
  \]
  For
  $(\chi, a) \in U_\Omega*\Twist_\Omega \subset
  \hTwistOmega *\Twist_\Omega$,
  \[
    f(\chi, a) = (\chi,{\chi(a)}) = g(\chi, a) \in U_\Omega
    \times \T
  \]
  Therefore,
  \[
    \big(f_{*}( \hTwistOmega \rtimes
    \Omega)\big)_{U_\Omega} \cong g_{*}(U_\Omega\rtimes \Omega).
  \]
  Hence, by Remark~\ref{rmk:t-case} and
  Corollary~\ref{cor:ext-abelian-bundle}
  \begin{align*}
    \cs(\Sigma; \Omega)
    &\cong q_\Omega \cs\big(
      \hTwistOmega\rtimes\G; f_{*}(
      \hTwistOmega \rtimes \Omega)\big)
      q_\Omega\\ 
    &\cong
      \cs\big((\hTwistOmega\rtimes\G)_{U_\Omega};
      (f_{*}( \hTwistOmega \rtimes
      \Omega))_{U_\Omega}\big)\\ 
    &\cong \cs\big(U_\Omega\rtimes\G;
      g_{*}(U_\Omega\rtimes \Omega)\big). 
  \end{align*}
  The case for the reduced $\cs$-algebras follows by a similar
  argument.
\end{proof}

Recall that an \'etale groupoid $\G$ is said to be \emph{effective}  if the interior of the isotropy groupoid is $\goo$
and  \emph{topologically principal}  if  the set of points with trivial isotropy is dense in $\goo$.   
These notions are equivalent if the  \'etale groupoid $\G$ is second countable (see \cite[Lemma~3.1]{bcfs:sf14}).
The above corollary allows us to generalize~\cite[Theorem 4.6]{ikrsw:jfa21} (see also \cite[Theorem 5.8]{dgnrw:jfa20} and
\cite[Theorem 4.6]{dgn:xx20}).

\begin{cor}\label{cor:twisted-cartan}
  With notation as above, suppose that $\G$ is  \'etale and that the
  action groupoid 
  $U_\Omega\rtimes \G$ is second countable and effective.  
  Then the image of $\cs_r(\A, \Twist_\Omega)$ under the natural
  embedding into $\cs_r(\Sigma; \Omega)$ is a Cartan subalgebra with
  Weyl twist $g_{*}(U_\Omega \rtimes \Omega)$.
\end{cor}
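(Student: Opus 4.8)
The plan is to deduce this from Corollary~\ref{cor:twisted-extensions} together with the standard criterion for a subalgebra of a twisted étale groupoid $C^*$-algebra to be Cartan. By Corollary~\ref{cor:twisted-extensions}, we have an isomorphism $\cs_r(\Sigma;\Omega)\cong\cs_r(U_\Omega\rtimes\G; g_{*}(U_\Omega\rtimes\Omega))$, where $g_*(U_\Omega\rtimes\G)$ is a $\T$-groupoid over the étale groupoid $H:=U_\Omega\rtimes\G$. First I would check that $H$ is étale: since $\G$ is étale and $U_\Omega$ is clopen in $\hTwistOmega=(\hTwistOmega\rtimes\G)^{(0)}$, the reduction $U_\Omega\rtimes\G$ inherits étaleness from $\hTwistOmega\rtimes\G$, which is étale because the bundle map $\hp$ is a local homeomorphism and a transformation groupoid of an étale groupoid acting on a space fibred over its unit space is étale. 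Under the isomorphism of Corollary~\ref{cor:twisted-extensions}, the image of $\cs_r(\A,\Twist_\Omega)$ should be carried onto $C_0(U_\Omega)$, viewed as functions on the unit space $H^{(0)}=U_\Omega$ of the twisted groupoid; I would verify this by tracking the embeddings through Theorem~\ref{thm:pushout} and the constructions in the preceding discussion, using that $\A\subset\Sigma$ corresponds to $\Twist_\Omega\subset\Omega$ and that the pushout by the pairing map collapses the abelian bundle $\Twist_\Omega$ precisely to the unit space of the twist.

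Once the subalgebra is identified as $C_0(H^{(0)})$ inside $\cs_r(H;g_*(U_\Omega\rtimes\Omega))$ for an étale $H$, I would invoke the Renault–Exel–Pitts-type theorem (as used in \cite[Theorem 4.6]{ikrsw:jfa21}, see also \cite[Theorem 5.8]{dgnrw:jfa20} and \cite[Theorem 4.6]{dgn:xx20}): for a second-countable étale groupoid $H$ that is effective (equivalently topologically principal, by \cite[Lemma~3.1]{bcfs:sf14}) and any twist $L$ over $H$, the subalgebra $C_0(H^{(0)})\subset \cs_r(H;L)$ is a Cartan subalgebra whose Weyl twist is exactly $L$. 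The hypothesis that $U_\Omega\rtimes\G$ is second countable and effective is precisely what is needed here. Thus the Weyl twist of the Cartan pair is $g_*(U_\Omega\rtimes\Omega)$, as claimed.

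The main obstacle I anticipate is the bookkeeping in the second sentence above: carefully chasing the natural embedding of $\cs_r(\A,\Twist_\Omega)$ through the chain of identifications — first $\cs_r(\Sigma;\Omega)\cong q_\Omega\cs_r(\hTwistOmega\rtimes\G;f_*(\hTwistOmega\rtimes\Omega))$ from Remark~\ref{rmk:t-case} and Corollary~\ref{cor:ext-abelian-bundle}, then the corner/reduction identification to $U_\Omega$ — and confirming that it lands on $C_0(U_\Omega)$ sitting as the canonical diagonal of the twisted groupoid algebra. This requires knowing that the isomorphism $\cs_r(\Sigma)\cong\cs_r(\hTwistOmega\rtimes\G;\dots)$ of \cite[Theorem~3.3]{ikrsw:jfa21} restricts on $\cs_r(\A)$ to the Gelfand isomorphism $\cs_r(\A)\cong C_0(\hTwistOmega)$, composed with the appropriate restriction maps; this is essentially the content of how $\widehat\Sigma$ was built in \cite{ikrsw:jfa21}, so I would cite that construction rather than redo it. The remaining points — étaleness of $U_\Omega\rtimes\G$ and the reduction identities $\big(\hTwistOmega\rtimes\G\big)_{U_\Omega}\cong U_\Omega\rtimes\G$ — are already established in the proof of Corollary~\ref{cor:twisted-extensions}, so the proof should be short once the identification of the diagonal is in hand.
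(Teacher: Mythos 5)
Your proposal is correct and follows essentially the same route as the paper, which simply deduces the result from Corollary~\ref{cor:twisted-extensions} together with Renault's Cartan subalgebra theorem \cite[Theorem 5.2]{ren:irms08} (the ``Renault--Exel--Pitts-type theorem'' you invoke). The additional bookkeeping you flag --- checking that $U_\Omega\rtimes\G$ is \'etale and that the natural embedding of $\cs_r(\A,\Twist_\Omega)$ lands on the canonical diagonal $C_0(U_\Omega)$ --- is left implicit in the paper's one-line proof, so your more careful account is if anything a slight improvement rather than a divergence.
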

\begin{proof}
 This follows from Corollary~\ref{cor:twisted-extensions} and \cite[Theorem 5.2]{ren:irms08}.
\end{proof}


\begin{example}\label{ex:biccharacter}
  Let $H$ be a discrete abelian group and let $E$ be a $\T$-twist over
  $H$---that is, a central extension by $\T$. Since $H$ is discrete,
  there is a $\T$-valued skew-symmetric bicharacter $\varpi$ on
  $H$ and a set of generating unitaries $\{ u_h \mid h \in H \}$ in
  $\cs(H; E)$ such that for all $g, h \in H$
  \[
    u_gu_h = \varpi(g, h)u_hu_g.
  \]
  By \cite[Lemma 7.2]{kle:ma65} the extension $E$ is trivial if and
  only if $\varpi(g, h) = 1$ for all $g, h \in H$.  Let $A$ be a
  subgroup of $H$ which is maximal amongst subgroups on which 
  $\varpi(\cdot, \cdot)$ is identically $1$.  It is shown in
  \cite[Example 1.12]{kum:cjm86} that the \cs-subalgebra $B$ generated
  by $\{ u_a \mid a \in A \}$ is a diagonal subalgebra of $\cs(H; E)$.
  We now show that this also follows from Corollary
  \ref{cor:twisted-cartan} with $\Sigma := H$, $\A := A$, $\G = H/A$
  and $\Omega := E$.

  Since the restriction of $\varpi$ to $A$ is trivial the extension
  $E$ is trivial on $A$ and thus $\Twist$ is trivial as a $\T$-twist.
  Hence, $B \cong \cs(A)$ and $U_\Twist \cong \hat{A}$.  There is a
  continuous homomorphism $\varpi_A: H \to \hat{A}$ such that for all
  $h \in H$, $a \in A$
  \[
    (\varpi_A(h))(a) = \varpi(h, a).
  \]
  Moreover, $A = \ker\varpi$ and thus $\varpi$ induces an injection
  $H/A \to \hat{A}$.  The action of $H/A$ on $\hat{A}$ is then given
  by translation and, hence, is free.  Since $H/A$ is \'etale and its
  action on $U_\Omega \cong \hat{A}$ is principal, the image of
  $\cs_r(\A, \Twist_\Omega) \cong \cs(A)$ under the natural embedding
  into $\cs_r(\Sigma; \Omega) = \cs(H; E)$ is a diagonal subalgebra.  
\end{example}

\subsection{Extensions by 2-cocycles}
\label{sec:twists-cocycles}
Extensions associated to groupoid $2$-cocycles yield some nice
applications of the pushout construction. For convenience, we review
the basics here. (For more details, see
\cite{iksw:jot19}*{Appendix~A}.)  Assume that $p_{\A}:\A\to\goo$ is a
$\G$-bundle. As before we write $\A(u)$ for $p_{\A}^{-1}(u)$ for
$u\in\goo$. Assume that $\phi:\G^{(2)}\to \A$ is a continuous
normalized 2-cocycle. That is,
$\phi(\gamma_1,\gamma_2)\in \A(r(\gamma_1))$ for all
$(\gamma_1,\gamma_2)\in \G^{(2)}$,
$\phi(\gamma_0,\gamma_1)+\phi(\gamma_0\gamma_1,\gamma_2)=\gamma_0\cdot
\phi(\gamma_1,\gamma_2)+\phi(\gamma_0,\gamma_1\gamma_2)$ for all
$(\gamma_0,\gamma_1),(\gamma_1,\gamma_2)\in\G^{(2)}$, and
$\phi(\gamma,u)=\phi(u,\gamma)=0_u$ for all $\gamma\in \A(u)$ and
$u\in\goo$.  Then the extension $\Sigma_\phi$ of $\G$ by $\A$
determined by $\phi$ is obtained by giving the fibered product $\A*\G$
the groupoid structure where
$(a_1,\gamma_1)(a_2,\gamma_2)=(a_1 +\gamma_1\cdot
a_2+\phi(\gamma_1,\gamma_2),\gamma_1\gamma_2)$ if
$(\gamma_1,\gamma_2)\in \G^{(2)}$ and
$(a,\gamma)^{-1}=(-\gamma^{-1}\cdot
a-\phi(\gamma^{-1},\gamma),\gamma^{-1})$.  We exhibit $\Sigma_\phi$ as
an extension of $\G$ by $\A$ via $i(a)=(a,p_{\A}(a))$ and
$p(a,\gamma)=\gamma$.

\begin{example}
  If $\A = \goo \times A$ is the trivial bundle (with trivial action),
  then an $\A$-valued cocycle is given by a continuous $A$-valued
  2-cocycle $\sigma$ on $\G$ via the formula 
  $\varphi(\gamma_1, \gamma_2) = (\sigma(\gamma_1,\gamma_2), r(\gamma_1))$.
\end{example}

\begin{example}\label{ex:astrid-jon}
  Let $\phi$ be a continuous normalized $\T$-valued 2-cocycle and let
  $\Sigma_\phi$ be the $\T$-twist associated to $\phi$.  Then by
  Proposition~\ref{prop:compact-case} and Remark~\ref{rmk:t-case}, and
  the fact that $\Sigma_{\phi^n} \cong n_*(\Sigma_\phi)$ for all
  $n \in \Z$, we have
  \[
    \cs(\Sigma_\phi) \cong \bigoplus_{n \in \Z} \cs(\G;
    \Sigma_{\phi^n}).
  \]
  This recovers \cite[Theorem 3.2]{broanh:pams14}.
\end{example}

\begin{example}[Transformation groupoids]\label{ex:transgr}
  Let $\G$ be a groupoid acting on the right of a locally
  compact Hausdorff space $X$. Recall that the transformation groupoid
  $X\rtimes \G$ is obtained by endowing the fibered product $X*\G$
  with the groupoid operations
  $(x,\gamma_1)(x\cdot \gamma_1,\gamma_2)=(x,\gamma_1\gamma_2)$ if
  $(\gamma_1,\gamma_2)\in\G^{(2)}$ and
  $(x,\gamma)^{-1}=(x\cdot \gamma,\gamma^{-1})$.

  Assume that $\phi:\G^{(2)}\to \A$ is a 2-cocycle as above. Then one
  can define a natural 2-cocycle
  $\tilde{\phi}:(X\rtimes \G)^{(2)}\to X*\A$ via
  $\tilde\phi((x,\gamma_1),(x\cdot\gamma_1,\gamma_2))=
  (x,\phi(\gamma_1,\gamma_2))$. The extension
  $\Sigma_{\tilde{\phi}}$ of $X\rtimes \G$ defined by $\tilde{\phi}$
  is isomorphic to the extension $X\rtimes \Sigma_\phi$, where
  $\Sigma_\phi$ is the extension of $\G$ defined by $\phi$. To see
  this, note that
  $\Sigma_{\tilde{\phi}}=\set{ \bigl( (x,a),(x,\gamma)\bigr) : x \in X, a \in \A^x, \gamma \in \G^x}$ with
  the operations
  \[\bigl(
    (x,a_1),(x,\gamma_1) \bigr)\bigl( (x\cdot \gamma_1,a_2),(x\cdot
    \gamma_1,\gamma_2) \bigr)=\bigl(
    (x,a_1+\gamma_1a_2+\phi(\gamma_1,\gamma_2)),(x,\gamma_1\gamma_2)
    \bigr)\] and
  \[\bigl( (x,a),(x,\gamma) \bigr)^{-1}=\bigl( (x\cdot
    \gamma,-\gamma^{-1}a-\phi(\gamma^{-1},\gamma)),(x\cdot
    \gamma,\gamma^{-1}) \bigr).
  \] 
  On the other hand,  
  $X\rtimes \Sigma_\phi=\set{(x,(a,\gamma)) : x \in X, a \in \A^x, \gamma \in \G^x}$ 
  with the operations
  \begin{equation}
    \label{eq:10}
    (x,(a_1,\gamma_1))(x\cdot
    \gamma_1,(a_2,\gamma_2))=
    (x,(a_1+\gamma_1a_2+\phi(\gamma_1,\gamma_2),\gamma_1\gamma_2)) 
  \end{equation}
  and
  \begin{equation}
    \label{eq:13}
    (x,(a,\gamma))^{-1}=(x\cdot \gamma,(-\gamma^{-1}\cdot
    a-\phi(\gamma^{-1},\gamma),\gamma^{-1})).
  \end{equation}
  Therefore the map $V:\Sigma_{\tilde{\phi}}\to X\rtimes \Sigma_\phi$
  defined by $V\bigl( (x,a),(x,\gamma) \bigr)=(x,(a,\gamma))$ is a
  groupoid isomorphism.
  
\end{example}

Suppose that $p_{\B}:\B\to\goo$ is another abelian $\G$-bundle and
that $f : \A\to \B$ is an equivariant map such that
$f|_{\A(u)}:\A(u)\to \B(u)$ is a continuous group homomorphism for all
$u\in \goo$. There is a $\B$-valued 2-cocycle
$f_*(\phi) : \G^{(2)}\to \B$ given by
$f_*(\phi) (\gamma_1,\gamma_2)=f(\phi(\gamma_1,\gamma_2))$.
\begin{lemma}\label{lem:push2coc}
  Let $\Sigma_{f_*(\phi) }$ be the extension of $\G$ by $\B$
  determined by $f_*(\phi) $. Then $f_*\Sigma_\phi$ is properly
  isomorphic to $\Sigma_{f_*(\phi)}$.
\end{lemma}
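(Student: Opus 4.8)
The plan is to invoke the uniqueness clause of Theorem~\ref{thm:functor}: it suffices to produce a groupoid homomorphism $f' : \Sigma_\phi \to \Sigma_{f_*(\phi)}$ for which the diagram~\eqref{eq:84} (with $\Sigma_\phi$ in place of $\Sigma$ and $\Sigma_{f_*(\phi)}$ in place of $\Sigma'$) commutes; the theorem then supplies a proper isomorphism $g : f_*\Sigma_\phi \to \Sigma_{f_*(\phi)}$ with $g \circ f_* = f'$, which is exactly the assertion. Before doing so one should note that $\Sigma_{f_*(\phi)}$ genuinely is a compatible extension of $\G$ by $\B$, so that Theorem~\ref{thm:functor} applies: this is precisely the general fact recalled just above Lemma~\ref{lem:push2coc}, that the groupoid built from a continuous normalized $2$-cocycle is a compatible extension, applied to the cocycle $f_*(\phi)$.

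First I would define $f' : \Sigma_\phi \to \Sigma_{f_*(\phi)}$ on the underlying fibred products by $f'(a, \gamma) = (f(a), \gamma)$. This is well defined because $\G$-equivariance of $f$ forces $p_{\B}\circ f = p_{\A}$, so $(f(a),\gamma) \in \B * \G$ whenever $(a,\gamma) \in \A * \G$; and it is continuous because $f$ is. Next I would verify that $f'$ is a homomorphism. Using the product in $\Sigma_\phi$, the hypothesis that $f$ restricts to a group homomorphism on each fibre, the $\G$-equivariance of $f$, and the definition $f_*(\phi)(\gamma_1,\gamma_2) = f(\phi(\gamma_1,\gamma_2))$, one computes
\[
  f'\big((a_1,\gamma_1)(a_2,\gamma_2)\big)
   = \big(f(a_1) + \gamma_1 \cdot f(a_2) + f_*(\phi)(\gamma_1,\gamma_2),\ \gamma_1\gamma_2\big)
   = f'(a_1,\gamma_1)\,f'(a_2,\gamma_2),
\]
and compatibility with inversion then follows (or may be read off directly from the inversion formulas, again using that $f$ is fibrewise a homomorphism and intertwines the $\G$-actions).

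It then remains to check that the two triangles of~\eqref{eq:84} commute. Since the inclusion in $\Sigma_\phi$ is $a \mapsto (a, p_{\A}(a))$ and that in $\Sigma_{f_*(\phi)}$ is $b \mapsto (b, p_{\B}(b))$, we get $f'(i(a)) = (f(a), p_{\A}(a)) = (f(a), p_{\B}(f(a))) = i(f(a))$; and since $f'$ leaves the $\G$-coordinate untouched, $p \circ f' = p$. With the commuting diagram in hand, Theorem~\ref{thm:functor} delivers the desired proper isomorphism $g : f_*\Sigma_\phi \to \Sigma_{f_*(\phi)}$.

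I do not expect a genuine obstacle here: the argument is a direct application of the universal property already proved, and the only real content is the homomorphism computation displayed above, which is routine given the cocycle identities. The one point worth stating carefully — and the only place the hypotheses on $f$ are used in an essential way rather than for bookkeeping — is that $f$ being fibrewise a group homomorphism (not merely a map of bundles) is what makes $f'$ multiplicative and what makes $f_*(\phi)$ a cocycle in the first place.
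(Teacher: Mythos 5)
Your proposal is correct and follows the same route as the paper: both define the map $(a,\gamma)\mapsto(f(a),\gamma)$, check that the relevant diagram commutes, and then invoke the uniqueness clause of Theorem~\ref{thm:functor}. The paper simply leaves the homomorphism computation and well-definedness checks implicit, which you have spelled out.
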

\begin{proof}
  Define $g:\Sigma_{\phi}\to \Sigma_{f_{*}\phi}$ by
  $g(a,\gamma)=(f(a),\gamma)$.  The diagram
  \begin{equation}
    \label{eq:77}
    \begin{tikzcd}[row sep = 1ex, column sep = 2cm]
      \A \arrow[dd,"f",swap] \arrow[r,"i"] & \Sigma_{\phi}
      \arrow[dd,"g"] \arrow[rd,"p"]
      \\
      &&\G \\
      \B\arrow[r,"i",swap] & \Sigma_{f_{*}(\phi)} \arrow[ru,"p",swap]
    \end{tikzcd}
  \end{equation}
  commutes.  Therefore the lemma follows from
  Theorem~\ref{thm:functor}.
\end{proof}

\subsection{The $\T$-groupoid defined by a 2-cocycle}
\label{sec:t-groupoid-2}

We continue to assume the setting from
Section~\ref{sec:twists-cocycles}: $\A$ is an abelian $\G$-bundle,
$\phi:\G^{(2)}\to \A$ is a 2-cocycle, and $\Sigma_\phi$ is the
extension defined by $\phi$. Then, as in Example \ref{ex:transgr}
there is a 2-cocycle
\[
  \tilde{\phi}:(\hA\rtimes \G)^{(2)}\to \hA \ast\A
\]
defined by
\begin{align}
  \label{eq:phitilde}
  \tilde{\phi}\bigl(
  (\chi,\gamma_1),(\chi\cdot \gamma_1,\gamma_2)
  \bigr)=\bigl(\chi, \phi(\gamma_1,\gamma_2) \bigr)
\end{align}
if $(\gamma_1,\gamma_2)\in\G^{(2)}$. Therefore we can identify
$\hA\rtimes \Sigma_\phi$ with $\Sigma_{\tilde{\phi}}$, the extension
of $\hA\rtimes\G$ determined by $\tilde{\phi}$.  Consider the
2-cocycle
$\phiw:=f_*\tilde{\phi}:(\hA\rtimes \G)^{(2)}\to \hA\times \T$ defined
via 
\begin{equation}
  \label{eq:phihat}
  \phiw\bigl(
  (\chi,\gamma_1),(\chi,\gamma_2)
  \bigr)=\bigl(    \chi, {\chi(\phi(\gamma_1,\gamma_2))} \bigr).
\end{equation}
Lemma \ref{lem:push2coc}~and~Theorem \ref{thm:pushout} imply that
$\Sigmaw_\phi$ is isomorphic to the $\T$-groupoid defined by $\phiw$
and $C^*(\Sigma_\phi)$ is isomorphic to
$C^*(\hA\rtimes\G;\Sigma_{\phiw})$.

\begin{example}
  The following example was studied in \cite{iksw:jot19}. Let $X$ be a
  second-countable locally compact Hausdorff space, and $G$ a
  second-countable locally compact abelian group. Let $\uG$ denote the
  sheaf of germs of continuous $G$-valued functions on $X$, and let
  $c\in Z^2(\U,\uG)$ be a normalized \v{C}ech two cocycle for some
  locally finite cover $\U=\{U_i\}_{i\in I}$ of $X$ by precompact open
  sets.  The blow-up groupoid $\G_\U$ with respect to the natural map
  from $\bigsqcup_{i}U_i$ into $X$ is
  \[
    \G_\U=\{(i,x,j)\,:\,x\in U_{ij}:=U_i\cap U_j\}
  \]
  with $(i,x,j)(j,x,k)=(i,x,k)$ and $(i,x,j)^{-1}=(j,x,i)$. As noted
  in \cite{iksw:jot19}*{Remark~3.3}, the \v{C}ech 2-cocycle $c$
  defines a groupoid 2-cocycle $\phi_c:\G_\U^{(2)}\to G$ via
  \[
    \phi_c\bigl( (i,x,j),(j,x,k) \bigr)=c_{ijk}(x).
  \]
  Let $\Sigma_c$ be the extension of $\G_\U$ by the 2-cocycle
  $\phi_c$. Define
  \[
    \phiw:\bigl( (\hat{G}\times \bigsqcup_{i}U_i)\rtimes \G_\U
    \bigr)^{(2)}\to \T\times \hat{G}\times \bigsqcup_{i}U_i
  \]
  by
  \[
    \phiw\bigl( (\tau,(i,x,j)),(\tau,(j,x,k)) \bigr)=\bigl(
    \overline{\tau(c_{ijk}(x))},\tau \bigr)
  \]
  for $\tau\in \hat{G}$ and
  $\bigl( (i,x,j),(j,x,k) \bigr)\in (\G_\U)^{(2)}$. 
  Then $\phiw$ is a
  groupoid $2$-cocycle, and the pushout groupoid $\Sigmaw$ is
  isomorphic to the $\T$-groupoid that is the extension of
  $(\hat{G}\times \bigsqcup_{i}U_i)\rtimes \G_\U$ defined by $\phiw$.

  Let $\V=\{\hat{G}\times U_i\}_{i\in I}$ be the locally finite cover
  of $\hat{G}\times X$, let $\uT$ be the sheaf of germs of continuous
  $\T$-valued functions, and define $\nu^c  = \set{\nu^c_{ijk}}\in Z^2(\V, \uT)$
  by
  \[
    \nu^c\bigl((\tau,(i,x,j)),(\tau,(j,x,k))\bigr)=
    \overline{\tau(c_{ijk}(x))}.
  \]
  Then the 2-cocycle $\phiw$ is defined by the \v{C}ech 2-cocycle
  $\nu^c\in Z^2(\V,\uT)$.

  That is, $\nu^c$ is the normalized 2-cocycle considered in
  \cite{iksw:jot19}*{Equation (3.4)}.  Hence the generalized
  Raeburn--Taylor $C^*$-algebra $A(\nu)$ studied in \cite{iksw:jot19}
  is isomorphic to the restricted $C^*$-algebra of the $\T$-groupoid
  defined by the 2-cocycle $\nu^c$.

  By \cite{iksw:jot19}*{Lemma~5.2}, $A(\nu)$ is a continuous-trace
  \cs-algebra with spectrum $\hat G\times X$ with Dixmier--Douady
  invariant $\delta(A(\nu))=[\nu^{c}]$.  For a concrete example,
  let $G=\Z$ and choose a \v Cech $2$-cocycle $c$ associated to any
  line bundle.
\end{example}
\begin{example}
  This example is an expansion of \cite{ikrsw:jfa21}*{Example 4.10}.
  Let $\Gamma=\Z$ act on $\T$ via rotation by $\alpha\in \Q$:
  $z\cdot k:=ze^{2\pi ik\alpha}$. If $\alpha=m/n$ with $m$ and $n$
  relatively prime, then $n\Z$ fixes the action. We have a
  short exact sequence of groups
  \begin{equation}
    \label{eq:nZ_Z_Zn}
    \begin{tikzcd}
      n\Z \arrow[r,"",hook] & \Z \arrow[r,"p",two heads] & \Z_n.
    \end{tikzcd}
  \end{equation}
  The action on $\T$ leads to an extension of groupoids
  \begin{equation}
    \label{eq:rational}
    \begin{tikzcd}
      n\Z\times \T \arrow[r,"i",hook] & \T\rtimes \Z
      \arrow[r,"\pi",two heads] & \T \rtimes \Z_n.
    \end{tikzcd}
  \end{equation}
  Thus, using the notation from the previous section,
  $\A=\T\times n\Z$, $\Sigma=\T\rtimes \Z$, and $\G=\T\rtimes
  \Z_n$. The $C^*$-algebra $C^*(\T\rtimes \Z)$ is the rational
  rotation $C^*$-algebra $\A_\alpha$ (see, for example,
  \cite{DeBr:84}). The groupoid $\D$ is the cartesian product
  $\T\times\T_n\times \T \times \Z$, where $\T_n=\T/\Z_n$ is the dual
  of $n\Z$. The extension $\Sigmaw$ is the quotient of $\D$ where we identify
  $(\omega,\chi,z,nl+k)$ with $(\omega, \chi^{nl} ,z,k )$. Therefore
  the rational rotation algebra $\A_\alpha$ is the completion of
  continuous functions $F$ on $\T\times \T_n\times\Z$ such that
  $F(\omega,\chi,nl+k)=\chi^{nl}F(\omega,\chi,k)$ for all $l\in \Z$.

  The extension $\Sigmaw$ is properly isomorphic to the one
  defined by a 2-cocycle. 
  Indeed, let $\sigma=e^{2\pi i\alpha}\in \T$ and view $\sigma$ as a
  character on $\Z$. Thus we can identify $\Z_n$ with $\sigma(\Z)$ and
  then the map $p$ in the short exact sequence \eqref{eq:nZ_Z_Zn}
  equals $\sigma$. Choose $s\in \Z$ such that $sm=1\,(\text{mod
  }n)$. Then the map $\tau:\Z_n\to \Z$ defined by $\tau(k)=sk$ defines
  a cross-section of $\sigma$. In particular, $\Z$ is properly
  isomorphic to the extension $n\Z\times_\omega \Z_n$ by a two cocycle
  $\omega:\Z_n\times \Z_n\to n\Z$ defined by $\tau$. Using the proof
  of \cite{iksw:jot19}*{Proposition~A.6},
  $\omega(\dotk_1,\dotk_2)=\tau(\dotk_1)+\tau(\dotk_2)-\tau(\dotk_1+\dotk_2)$.
  A quick computation shows that
  \[
    \omega(\dotk_1,\dotk_2)=
    \begin{cases}
      0 & \text{ if } \dotk_1+\dotk_2 <n\\
      ns & \text{ if } \dotk_1+\dotk_2\ge n,
    \end{cases}
  \]
  which recovers the 2-cocycle used in Step 2 of the proof of
  \cite{DeBr:84}*{Proposition~1}.

  The map $\underline{\tau}:\T\rtimes \Z_n\to \T\rtimes \Z$ defined
  by $\underline{\tau}(z,k)=(z,\tau(k))$ is a cross-section of the
  extension of the groupoids~\eqref{eq:rational}. Hence $\T\rtimes \Z$
  is properly isomorphic to the extension given by the 2-cocycle
  $\varphi\in Z^2(\T\rtimes \Z_n,\T\times n\Z)$ defined by
  $\varphi\bigl((w,\dotk_1),(w\cdot
  \dotk_1,\dotk_2)\bigr)=(w,\omega(\dotk_1,\dotk_2))$. The extension
  of the 2-cocycle $\varphi$ is
  $\Sigma_\varphi= \T\times n\Z\times \Z_n$ with operations
  $(w,nl_1,\dotk_1)(w\cdot
  \dotk_1,nl_2,\dotk_2)=(w,nl_1+nl_2+\omega(\dotk_1,\dotk_2),\dotk_1+\dotk_2)$
  and
  $(w,nl,\dotk)^{-1}=(w,-nl-\omega(-\dotk,\dotk),-\dotk)$. Following
  the proof of \cite{iksw:jot19}*{Proposition~A.6} the isomorphism
  between $\Sigma_\varphi$ and $\T\rtimes \Z$ is given by
  $(w,nl,\dotk)\mapsto (w,nl+\tau(\dotk))$.

  We have that $\hA\simeq \T_n\times \T$ and
  $\hA* \A\simeq \T_n\times \T\times n\Z$. The action of
  $\G=\T\rtimes \Z_n$ on $\hA$ is given via
  $(\chi,w)\cdot (w,\dotk)=(\chi,w\cdot
  k)=(\chi,w\sigma^k)$. Therefore we can identify $\hA\rtimes\G$ with
  $\T_n\times\T\rtimes \Z_n:= \{(\chi,w,\dotk)\in \T_n\times\T\times
  \Z_n\}$, where
  $(\chi,w,\dotk_1)\cdot (\chi,w\cdot
  k_1,\dotk_2)=(\chi,w,\dotk_1+\dotk_2)$ and
  $(\chi,w,\dotk)^{-1}=(\chi,w\cdot k,-\dotk)$.  Thus the 2-cocycle
  $\tilde{\phi}:\bigl(\T_n\times \T\rtimes \Z_n\bigr)^{(2)}\to
  \T_n\times \T\times n\Z$ of~\eqref{eq:phitilde} is defined by
  \[
    \tilde{\phi}\bigl((\chi,w,\dotk_1),(\chi,w\cdot
    \dotk_1,\dotk_2)\bigr)=\bigl(\chi,w,\omega(\dotk_1,\dotk_2)\bigr).
  \]
  By Lemma \ref{lem:push2coc}, $\Sigmaw$ is properly isomorphic to the
  extension by the 2-cocycle $\phiw$ which is the pushout of
  $\tilde{\phi}$.  Therefore
  $\phiw:\bigl(\T_n\times \T\rtimes \Z_n\bigr)^{(2)}\to \T_n\times
  \T\times \T$ is defined by 
  \[
    \phiw((\chi,w,\dotk_1),(\chi,w\cdot
    \dotk_1,\dotk_2))=(\chi,w,{\chi^{\omega(\dotk_1,\dotk_2)}}).
  \]
  Hence the rotation algebra $\A_\alpha$ is isomorphic to
  $C^*(\T_n\times\T\rtimes \Z_n;\Sigma_{\phiw} )$. For $\chi \in \T_n$, define
  $\chi_*(\phi) : (\T\rtimes Z_n)^{(2)} \to \T$ by
  \[
  {\chi}_*(\phi)((w,\dotk_1),(w\cdot
  \dotk_1,\dotk_2)=(w,{\chi}^{\omega(\dotk_1,\dotk_2)}).
  \]
  Then Proposition~\ref{prop:bundle} implies that $\A_\alpha$ is the
  section algebra of an upper-semi\-con\-tin\-uous $C^*$-bundle over
  $\T_n$ with fiber at $\chi\in \T_n$ isomorphic to
  $C^*(\T\rtimes Z_n;\Sigma_{{\chi}_*(\phi)})$.
\end{example}

\appendix

\section{Bundles of Twists}
\label{sec:bundles-twists}

Let $\Sigma$ be a twist over $\G$.   Alternatively,
$\Sigma$ is a $\T$-groupoid so that we have the following
diagram 
\begin{equation}
  \label{eq:11}
  \begin{tikzcd}[column sep=3cm]
      \goo\times\T \arrow[r,"i"] \arrow[dr,shift left, bend right = 15]
      \arrow[dr,shift right, bend right = 15]&\Sigma\arrow[r,"j",
      two heads] \arrow[d,shift left] \arrow[d,shift
      right]&\G, \arrow[dl,shift left, bend left = 15]
      \arrow[dl,shift right, bend left = 15]
      \\
      &\goo&
    \end{tikzcd}
  \end{equation}
  where as usual we have identified $\go$ and $\goo$.  In particular,
  if $F\subset \goo$ is $\G$-invariant, then it is $\Sigma$-invariant
  and the reduction $\Sigma\restr F$ is also a twist over the
  reduction $\G\restr F$.

  Suppose that $p:\goo\to T$ is a continuous map such that $p\circ
  r=r\circ s$.  Then we say that $\Sigma$ is a groupoid bundle over
  $T$.\footnote{The third author defined groupoid bundles in
    \cite{ren:ms15}*{Definition~3.3} where it is also required that $p$
    be open.}   Then $p^{-1}(t)$ is invariant for all $t\in T$.  We
  write $\Sigma(t)$ and $\G(t)$ for the restrictions to $p^{-1}(t)$,
  respectively.  Then $\Sigma(t)$ is a twist over $\G(t)$.

  \begin{prop}
    \label{prop-jean2} Suppose that $\G$ is a second countable locally
    compact Hausdorff groupoid with a Haar system and that $\Sigma$
    is a twist over $\G$.  If $p:\goo\to T$ is a continuous
    map such that $p\circ r=p\circ s$, then $\cs(\G;\Sigma)$ is a
    $C_{0}(T)$-algebra.  Let $\Sigma(t)$ be the
    twist over $\G(t)$ defined above. Then $\cs(\G;\Sigma)$ is
    (isomorphic to) the section algebra of an upper-semicontinuous
    \cs-bundle over $T$.  The fibre $\cs(\G;\Sigma)(t)$ is isomorphic
    to $\cs(\G(t);\Sigma(t))$.
  \end{prop}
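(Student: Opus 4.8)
The plan is to prove the three assertions in turn, the first two being essentially formal and the content lying in the identification of the fibres. For the $C_0(T)$-algebra structure (where $T$ is taken to be locally compact Hausdorff), I would let $\varphi\in C_0(T)$ act on $\cc(\G;\Sigma)$ by $(\varphi\cdot f)(\sigma):=\varphi\bigl(p(r(\sigma))\bigr)f(\sigma)$, which stays in $\cc(\G;\Sigma)$ since $r(z\sigma)=r(\sigma)$ for $z\in\T$. A short computation with the convolution formula gives $\varphi\cdot(f*g)=(\varphi\cdot f)*g=f*(\varphi\cdot g)$, the last two equalities using the hypothesis $p\circ r=p\circ s$: in $f*g$ the integration variable $\tau$ ranges over elements with $\dot r(\tau)=\dot r(\sigma)$, and $p(s(\tau))=p(r(\tau))=p(r(\sigma))$ along it. Hence each $\varphi$ is a central multiplier of $\cs(\G;\Sigma)$, the assignment $\varphi\mapsto\varphi\cdot{}$ is a $*$-homomorphism into $ZM(\cs(\G;\Sigma))$, and it is nondegenerate because $p\bigl(r(\supp f)\bigr)$ is compact for every $f\in\cc(\G;\Sigma)$, so a $\varphi$ equal to $1$ there recovers $f$. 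By the standard correspondence between $C_0(T)$-algebras and section algebras of upper-semicontinuous $\cs$-bundles (see, e.g., \cite{wil:crossed}), $\cs(\G;\Sigma)$ is then the section algebra of such a bundle over $T$, with fibre $\cs(\G;\Sigma)/I_t\cs(\G;\Sigma)$ at $t$, where $I_t:=C_0(T\setminus\{t\})$.

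It remains to identify this fibre with $\cs(\G(t);\Sigma(t))$. Since $p^{-1}(t)$ is a closed $\G$-invariant subset of $\goo$, the reduction $\G(t)$ is a closed, second countable, locally compact Hausdorff subgroupoid carrying the Haar system obtained by restricting $\alpha$ (for $u\in p^{-1}(t)$ the measure $\alpha^u$ already lives on $\G^u\subseteq\G(t)$), so $\cs(\G(t);\Sigma(t))$ makes sense, and $\Sigma(t)$ is a twist over $\G(t)$. Restriction of functions defines a $*$-homomorphism $\rho_t:\cc(\G;\Sigma)\to\cc(\G(t);\Sigma(t))$ — convolution and adjoint restrict because they only involve $\alpha^u$ for $u$ in the relevant invariant set — and pre-composing representations of $\cc(\G(t);\Sigma(t))$ with $\rho_t$ gives $\|\rho_t(f)\|\le\|f\|$ for the defining (supremum-over-all-$*$-representations) norms, so $\rho_t$ extends to a homomorphism $\bar\rho_t:\cs(\G;\Sigma)\to\cs(\G(t);\Sigma(t))$. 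It is onto: given $g\in\cc(\G(t);\Sigma(t))$, use metrizability of $\Sigma$ to extend $g$ (by Tietze, then cut down with a compactly supported bump) to some $\tilde g\in\cc(\Sigma)$ with $\tilde g|_{\Sigma(t)}=g$, and average over $\T$ via $(\mathcal P\tilde g)(\sigma):=\int_\T \bar z\,\tilde g(z\sigma)\,dz$; then $\mathcal P\tilde g\in\cc(\G;\Sigma)$ and $\rho_t(\mathcal P\tilde g)=g$.

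Finally I would show $\ker\bar\rho_t=I_t\cs(\G;\Sigma)$. One inclusion is immediate, since $\bar\rho_t$ annihilates $I_t\cdot\cc(\G;\Sigma)$ and hence $I_t\cs(\G;\Sigma)$. For the reverse it suffices to treat $f\in\cc(\G;\Sigma)$ with $\rho_t(f)=0$: for $\varepsilon>0$ the compact set $\{|f|\ge\varepsilon\}$ is disjoint from $\Sigma(t)$, so $K_\varepsilon:=p\bigl(r(\{|f|\ge\varepsilon\})\bigr)$ is a compact subset of $T\setminus\{t\}$; choosing $\psi\in C_c(T)$ with $\psi\equiv1$ on $K_\varepsilon$ and $t\notin\supp\psi$ makes $f-\psi\cdot f$ supported in $\supp f$ with supremum norm $\le\varepsilon$, and the standard bound on the $\cs$-norm of a section supported in a fixed compact set by a constant times its supremum norm gives $\|f-\psi\cdot f\|\to0$, so $f\in I_t\cs(\G;\Sigma)$. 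Consequently $\cc(\G;\Sigma)\cap I_t\cs(\G;\Sigma)=\ker\rho_t$, so the quotient map $q$ and $\rho_t$ have the same kernel on $\cc(\G;\Sigma)$, yielding a $*$-isomorphism of the dense subalgebra $q(\cc(\G;\Sigma))$ of the fibre $\cs(\G;\Sigma)/I_t\cs(\G;\Sigma)$ onto $\cc(\G(t);\Sigma(t))$. Thus the fibre is a $\cs$-completion of $\cc(\G(t);\Sigma(t))$, and since $\cs(\G(t);\Sigma(t))$ is the completion for the largest $\cs$-norm on $\cc(\G(t);\Sigma(t))$ there is a surjection $\cs(\G(t);\Sigma(t))\to\cs(\G;\Sigma)/I_t\cs(\G;\Sigma)$ restricting to the identity on $\cc(\G(t);\Sigma(t))$; as $\bar\rho_t$ induces a surjection in the opposite direction also restricting to the identity there, the two are mutually inverse, whence $\cs(\G;\Sigma)/I_t\cs(\G;\Sigma)\cong\cs(\G(t);\Sigma(t))$, which is the remaining assertion.

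The main obstacle is the fibre identification; within it the two delicate steps are the surjectivity of $\rho_t$ (the Tietze-plus-$\T$-averaging argument) and, above all, the inclusion $\ker\rho_t\subseteq I_t\cs(\G;\Sigma)$, which is where local compactness of $T$ and the $\cs$-norm estimate for sections with fixed compact support are needed. An alternative treatment of the fibre identification appeals to the twisted disintegration theorem: a representation of $\cs(\G;\Sigma)$ killing $I_t$ is concentrated over $p^{-1}(t)$ and so factors through $\Sigma(t)$, and conversely; the argument sketched above sidesteps this.
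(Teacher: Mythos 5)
Your argument is correct in outline, but it takes a genuinely different route from the paper. The paper disposes of the whole proposition in a few lines by viewing $\cs(\G;\Sigma)$ as the \cs-algebra of a Fell bundle over $\G$, invoking \cite{ionwil:hjm11}*{Theorem~3.7} for the exact sequence
$0\to\cs(\G\restr{U(t)},\B)\to\cs(\G,\B)\to\cs(\G(t),\B)\to 0$
with $U(t)=\goo\setminus p^{-1}(t)$, and then running the argument of \cite{wil:toolkit}*{Proposition~5.37} to get the $C_0(T)$-structure and the fibre identification simultaneously. You instead build the $C_0(T)$-action by hand (your centrality computation using $p\circ r=p\circ s$ is exactly right) and then reprove, from scratch, the special case of the Ionescu--Williams exactness theorem that is needed: surjectivity of restriction via Tietze plus $\T$-averaging, and $\ker\rho_t\cap\cc(\G;\Sigma)\subseteq I_t\cs(\G;\Sigma)$ via the $\{|f|\ge\varepsilon\}$ truncation and the $I$-norm estimate. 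What your approach buys is self-containedness and transparency about where each hypothesis enters; what the paper's citation buys is brevity and the reassurance that the one genuinely delicate point has been dealt with carefully elsewhere.

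That delicate point is the final step of your fibre identification, and you should be more careful there. You assert that ``$\cs(\G(t);\Sigma(t))$ is the completion for the largest \cs-norm on $\cc(\G(t);\Sigma(t))$'' in order to produce the surjection $\cs(\G(t);\Sigma(t))\to\cs(\G;\Sigma)/I_t\cs(\G;\Sigma)$. The full norm is the supremum over \emph{bounded} (equivalently, $I$-norm bounded, or inductive-limit continuous) representations; it is not automatic that an arbitrary \cs-seminorm on $\cc(\G(t);\Sigma(t))$ --- in particular the quotient norm pulled back through your identification $q(\cc(\G;\Sigma))\cong\cc(\G(t);\Sigma(t))$ --- is dominated by it. To close this you must check that the quotient norm is dominated by a constant times the supremum norm on functions supported in a fixed compact set (which follows by choosing extensions $\tilde g$ with $\supp\tilde g$ in a fixed compact neighbourhood and $\|\tilde g\|_\infty\le 2\|g\|_\infty$, say) and then appeal to the disintegration theorem. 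This is precisely the content that \cite{ionwil:hjm11}*{Theorem~3.7} packages up, so the gap is fillable, but as written it is elided. Everything else checks out, including the observation (needed for $K_\varepsilon\not\ni t$) that invariance of $p^{-1}(t)$ forces $\Sigma(t)=(p\circ r)^{-1}(t)$.
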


  \begin{proof}
    Recall that $\cs(\G;\Sigma)$ is the \cs-algebra
    $\cs(\G,\B)$ of
    a Fell bundle $q:\B\to \G$ as described in
    \cite{muhwil:dm08}*{Example~2.9}.  Similarly,
    $\cs(\G(t);\Sigma(t))$ is the \cs-algebra $\cs(\G(t),\B)$ of
    $q\restr{q^{-1}(\G(t)) }$.  Let $U(t)=\goo\setminus p^{-1}(t)$.
    Using \cite{ionwil:hjm11}*{Theorem~3.7} (as in
    \cite{simwil:ijm13}*{Lemma~9}), we obtain a short exact sequence
    \begin{equation}
      \label{eq:12}
      \begin{tikzcd}
        0\arrow[r]&\cs(\G\restr{U(t)},\B)\arrow[r,"i"]&\cs(\G,\B)\arrow[r,"j"] &
        \cs(\G(t),\B) \arrow[r]&0
      \end{tikzcd}
    \end{equation}
    where $i$ identifies $\cs(\G\restr{U(t)},\B)$ with the completion
    in $\cs(\G,\B)$ of the
    ideal of sections in $\Gamma_{c}(\G,\B)$ that vanish off
    $\G\restr{U(t)}$, and $j$ is given on $\Gamma_{c}(\G,\B)$ by restriction
    to $p^{-1}(t)$.  Now 
    exactly as in \cite{wil:toolkit}*{Proposition~5.37}, we see that
    $\cs(\G,\B)$ is a $C_{0}(T)$-algebra with fibres $\cs(\G,\B)(t)$
    identified with $\cs(\G(t),\B)$.
  \end{proof}


\bibliographystyle{amsplain} \bibliography{iksw}
\end{document}